\newtheorem{thm}{Theorem}[section]
\newtheorem{lem}[thm]{Lemma}
\newtheorem{prop}[thm]{Proposition}
\theoremstyle{definition}
\newtheorem{defn}[thm]{Definition}
\theoremstyle{remark}
\newtheorem{rem}[thm]{Remark}
\numberwithin{equation}{section}
\numberwithin{thm}{section}
\newcommand{\lsm}{\lesssim}
\newcommand{\C}{{\mathbb{C}}}
\newcommand{\R}{{\mathbb{R}}}
\renewcommand{\l}{\lambda}
\newcommand{\F}{{\mathcal F}}
\newcommand{\cd}{\cdot\,}
\newcommand{\ed}{\end {document}}
\newcounter{smalllist}
\title[energy-critical NLS]
{Global well-posedness and scattering for defocusing energy-critical NLS in the
exterior of balls with radial data}
\author[D. Li]{Dong Li}
\address{Department of Mathematics, University of Iowa, 14 MacLean Hall, Iowa City, USA 52242}%
\email{mpdongli@gmail.com}
\author[H. Smith]{Hart Smith}
\address{Department of Mathematics, University of Washington, Seattle, WA  98195-4350}
\email{hart@math.washington.edu}
\author[X. Zhang]{Xiaoyi Zhang}
\address{Department of Mathematics, University of Iowa, 14 MacLean Hall, Iowa City, USA 52242 and
Chinese Academy of Science, Beijing}%
\email{zh.xiaoyi@gmail.com}
{\normalsize }
\begin{document}
\maketitle
\begin{abstract}
We consider the defocusing energy-critical NLS in the exterior of
the unit ball in three dimensions. For the initial value problem
with Dirichlet boundary condition we prove global well-posedness and
scattering with large radial initial data in the Sobolev space $\dot
H_0^1$. We also point out that the same strategy can be used to
treat the energy-supercritical NLS in the exterior of balls with
Dirichlet boundary condition and radial $\dot H_0^1$ initial data.
\end{abstract}

\section{Introduction}
Let $\Omega=\R^3\setminus {\bar B}(0,1)$ be the exterior of the unit ball. We consider the defocusing energy critical NLS in $\Omega$ with Dirichlet boundary condition:
\begin{align}\label{nls}
\begin{cases}
i\partial_t u+\Delta u=|u|^4 u\equiv F(u)\,, \ (t,x)\in \R\times \Omega,\\
u(t,x)|_{\R \times \partial \Omega}=0,\\
u(0,x)=u_0(x).
\end{cases}
\end{align}

Our main purpose is to prove the global solvability and scattering
for the solution to \eqref{nls} under the assumption that $u_0\in
\dot H_0^1(\Omega)$ (see Section \ref{subsec_sob} for the
definition), and that $u_0$ is spherically symmetric.

In the whole space case $\R^n$ with $n\ge 3$, the Cauchy problem for
the energy critical NLS has been successfully attacked in both
defocusing and focusing cases \cite{borg:scatter, ckstt:gwp,
merlekenig, kv, RV, tao:radial, thesis:art}. On the other hand, the
understanding of the critical nonlinear problem of NLS posed on
exterior domains is still unsatisfactory. The difficulty comes from
several aspects. First of all, concerning linear estimates, the
dispersive estimates and Strichartz estimates are not always
available and often more limited than the whole space case.
Secondly, the nonlinear problem no longer has translation invariance
or scale invariance, and many of the technical tools built on
frequency analysis are not immediately applicable in the obstacle
case.

The Strichartz estimates on exterior domains or more general
Riemannian manifolds are usually obtained by using local smoothing
estimates \cite{bss:schrodinger, CS88, Sjolin87, Vega88} combined
with semi-classical parametrix constructions. For the domain
exterior to a non-trapping obstacle in $\R^n$, Blair, Smith and
Sogge \cite{bss:schrodinger} obtained a range of scale-invariant Strichartz
estimates, in particular the endpoint $L_t^4 L_x^\infty$ estimate in
dimension $n=3$, by using a
microlocal parametrix previously used for the wave equation in
\cite{bss:wave, ss:acta}. For the exterior domain to a strictly
convex obstacle, i.e. $\Omega=\R^n\setminus \mathcal K$, where
$\mathcal K$ is strictly convex, Ivanovici \cite{Iv08} obtained the
full range of Strichartz estimates excepting endpoints, by using the
Melrose-Taylor parametrix construction. For Strichartz estimates
with loss of derivatives, see \cite{BGT04, Anton08, Iva:ball}.

For the energy-critical nonlinear wave equation in 3 dimensional
smooth bounded domains with Dirichlet boundary condition,
Burq, Lebeau and Planchon \cite{BLP08} established global
well-posedness for $H_0^1$ solutions. Previously, Smith and Sogge \cite{SS10} proved
global well-posedness for the corresponding problem on the exterior domain to a
strictly convex obstacle.

\vspace{0.2cm}

In this paper, we shall prove the global well-posedness and
scattering of energy-critical NLS outside the unit
ball in $\R^3$ under the radial assumption. Our result is the following.

\begin{thm}\label{thm:main}
Let $\Omega=\R^3\setminus \bar{B}(0,1)$. Let $u_0\in \dot
H_{0}^1(\Omega)$ be spherically symmetric. Then there exists a
unique solution $u\in C_t^0\dot H_0^1(\mathbb R\times \Omega)$ to
\eqref{nls}, and
\begin{align*}
\|u\|_{L_{t,x}^{10}(\R\times\Omega)}\le C\bigl(\|u_0\|_{\dot
H_0^1(\Omega)}\bigr)\,.
\end{align*}
Moreover, there exist unique $v_{\pm}\in \dot H_0^1(\Omega)$ such
that
\begin{align*}
\lim_{t\to \pm\infty}\|u(t)-e^{it\Delta_D}v_{\pm}\|_{\dot H_0^1(\Omega)}=0.
\end{align*}
Here, $\Delta_D$ is the Dirichlet Laplacian and $e^{it\Delta_D}$ is the free propagator.
\end{thm}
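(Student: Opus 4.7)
I would follow the concentration-compactness/rigidity framework of Kenig-Merle, adapted to the exterior-domain setting.

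First, I would set up the local and perturbative theory. The Blair-Smith-Sogge Strichartz estimates for the Dirichlet propagator $e^{it\Delta_D}$ on the exterior of a non-trapping obstacle—in particular the $L^4_t L^\infty_x$ endpoint in dimension three—together with the Sobolev embedding $\dot H^1_0(\Omega)\hookrightarrow L^6(\Omega)$, give local well-posedness in $\dot H^1_0$, small-data global scattering, and a long-time perturbation lemma via standard Duhamel contraction arguments. Radial symmetry is preserved by the evolution throughout.

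Second, assuming the theorem fails, I introduce the critical energy
\begin{align*}
E_c := \inf\bigl\{ E : \|u\|_{L^{10}_{t,x}} = \infty \text{ for some radial } \dot H^1_0 \text{ solution with } E(u)\le E\bigr\}.
\end{align*}
Applying a linear profile decomposition for $e^{it\Delta_D}$ restricted to radial data to a sequence of near-minimal blow-up solutions, together with the perturbation lemma, should produce a critical element $u_c$ of energy exactly $E_c$ whose orbit is precompact in $\dot H^1_0(\Omega)$ modulo a single scaling parameter $N(t)$ (radial symmetry eliminates spatial translations). The possible behaviors of $N(t)$ are then severely constrained: profiles with scales $N(t)\to 0$ correspond to escape to spatial infinity, where the profile can be compared to a solution of the energy-critical NLS on all of $\R^3$ and so are ruled out by the Colliander-Keel-Staffilani-Takaoka-Tao global theorem in the whole space; profiles with $N(t)\sim 1$ or $N(t)\to\infty$ correspond to concentration near $\partial\Omega$.

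The boundary-concentration case is eliminated by a Morawetz inequality with a radial multiplier. Because $\bar B(0,1)$ is strictly convex, the boundary contribution in the Morawetz identity has a favorable sign, yielding a spacetime bound of the form
\begin{align*}
\int_I\int_\Omega \frac{|u(t,x)|^6}{|x|}\, dx\, dt \lesssim E(u_0),
\end{align*}
which combined with the almost periodicity of $u_c$ forces $u_c\equiv 0$, contradicting $E_c<\infty$. Scattering as $t\to\pm\infty$ then follows by a standard Duhamel/energy argument from the global $L^{10}_{t,x}$ bound, defining $v_\pm$ via the scattering data of the Dirichlet free evolution.

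The main technical obstacle is the linear profile decomposition for $e^{it\Delta_D}$ on the exterior domain: the absence of translation and scaling symmetry prevents a direct application of the Keraani-type decomposition. One must identify the correct asymptotic symmetry classes—bounded-scale profiles attached to the obstacle, escaping profiles matched to whole-space evolutions via suitable spatial truncations, and profiles shrinking onto interior points—and construct the corresponding nonlinear profiles while respecting the Dirichlet boundary condition, which is especially delicate for profiles that interact with $\partial\Omega$. Once this machinery is in place the Palais-Smale-type extraction of $u_c$ and the Morawetz rigidity step proceed along now-standard lines.
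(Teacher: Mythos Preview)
Your proposal follows the Kenig--Merle concentration-compactness road map, whereas the paper takes a completely different route: it adapts Bourgain's direct induction-on-energy argument (as in Bourgain and Tao's radial whole-space papers). The paper never builds a profile decomposition or extracts a critical element. Instead, it develops from scratch the radial harmonic analysis on $\Omega$ (explicit fundamental solution, dispersive estimate, Littlewood--Paley projectors and Bernstein, the equivalence $\|\nabla f\|_{L^p}\approx\|(-\Delta_D)^{1/2}f\|_{L^p}$ for $1<p<3$), then splits the time axis into intervals of unit $L^{10}_{t,x}$ mass, separates exceptional from unexceptional intervals, proves a uniform \emph{lower} bound on the length of unexceptional intervals (a new feature absent in the whole-space case), proves mass concentration on each unexceptional interval via frequency/space localization, feeds this into a \emph{localized} Morawetz estimate, and closes with Bourgain's combinatorial counting argument.

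There is a genuine gap in your sketch at the rigidity step. The Morawetz bound you write,
\[
\int_I\int_\Omega \frac{|u|^6}{|x|}\,dx\,dt \lesssim E(u_0),
\]
does not hold under the hypothesis $u_0\in\dot H^1_0(\Omega)$: the virial boundary term $\int_\Omega \operatorname{Im}(\bar u\,\partial_r u)\,dx$ requires control of $\|u\|_{L^2}$, which the energy does not provide. The paper is explicit about this---it notes that for $u_0\in H^1_0$ the unlocalized Morawetz trivializes the whole problem, and that the entire difficulty lies in handling homogeneous $\dot H^1_0$ data. Its substitute is the localized bound
\[
\int_I\int_{1\le|x|\le A|I|^{1/2}}\frac{|u|^6}{|x|}\,dx\,dt \lesssim A\,|I|^{1/2},
\]
which is then combined with the mass-concentration lemma. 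In a Kenig--Merle argument you would likewise need to localize (in space or frequency) and exploit the almost-periodic structure of $u_c$ to control the missing $L^2$ term; your outline does not address this. Beyond that, the profile decomposition for $e^{it\Delta_D}$ you flag as the main obstacle is indeed substantial---it was only carried out in later work (Killip--Visan--Zhang)---so your plan, while ultimately viable, trades the paper's explicit radial computations for a heavy piece of machinery you would have to build.
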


\begin{rem}
The assumption $u_0 \in \dot H^1_0(\Omega)$ is very natural here due to the
energy critical nature of the problem. On the other hand, if we
assume $u_0 \in H^1_0(\Omega)$, then the proof can be trivialized,
see Section \ref{sec_inhomo} for details.
\end{rem}

The proof of Theorem \ref{thm:main} follows roughly the strategy in
the paper by Bourgain \cite{borg:scatter} for dimension $n=3,4$ and
Tao \cite{tao:radial} for all dimensions, which dealt with
defocusing energy critical NLS in the whole-space case with radial
data. However, since many technical tools are missing in this
setting, we devote a large portion of the work to establishing the
technical tools in analogy with the whole space case. A crucial fact
exploited in this paper is that under the radial assumption, the
eigenfunctions of the Dirichlet Laplacian in the domain exterior to
a ball can be explicitly computed. We then use this explicit
knowledge to establish the following basic estimates.

$\indent\bullet$ The fundamental solution is written explicitly
through spectral representation of Dirichlet Laplacian. The
$L^1-L^{\infty}$ dispersive estimate then follows from the explicit
representation of the linear solution.  As a consequence, we
prove the full range Strichartz estimates with no loss of
derivatives.

$\indent\bullet$ The Littlewood-Paley operators are defined through
functional calculus. Bernstein type estimates for the
Littlewood-Paley operators are also shown to hold true.

$\indent\bullet$ Sobolev spaces on the exterior domain $\dot
H_{0}^{1,p}(\Omega)$ and $\dot H_{D}^{1,p} (\Omega)$ for $1<p<3$ (see Subsection
\ref{subsec_sob}) are proved to be equivalent. Therefore the product
rule and chain rule for the Dirichlet Laplacian is still applicable
as in the whole space case.

With these technical tools in hand, we reduce Theorem \ref{thm:main}
to establishing the a priori bound of the $L_{t,x}^{10}$-norm of the
solution. Then we follow the spirit of the argument in
\cite{borg:scatter} and \cite{tao:radial}. Whilst a handful of
estimates still hold true as in the whole space case, the most
problematic part comes from proving the mass localization. In the
whole space case, the key fact used in the proof is that the
Littlewood-Paley operator is defined through convolution with a
normalized Schwartz function. This property no longer holds in our
setting since we do not have translation-invariance. Alternatively,
we shall prove the localization though a careful analysis on the
interaction between spatial and frequency cutoffs. A crucial result
is Lemma \ref{lem:lowerbound} where we show that any time interval
which carries nontrivial space-time norm must have a uniform lower
bound.\footnote{In the language of \cite{tao:radial}, we actually
showed that the length of unexceptional intervals must have a
uniform lower bound. This is quite different from the whole-space
case.}

In Section 2 we introduce basic notations and some useful estimates.
The proof of Theorem \ref{thm:main} is contained in Section 3. We
shall only focus on the parts which are different from the whole
space case: the mass localization and the Morawetz inequality. The
combinatorics argument, which eventually gives the upper bound of
the space-time norm of the solution, will only be sketched. One can
refer to \cite{borg:scatter} or \cite{tao:radial} for more details.
In section 4 and 5 we give remarks on the energy supercritical
problem and the case with inhomogeneous Sobolev data.

\section*{Acknowledgements}
Dong Li is supported by NSF Grant DMS-0908032. Hart Smith is supported by
NSF Grant DMS-0654415. Xiaoyi Zhang is supported by an Alfred P.
Sloan Research Fellowship and also start-up funding from the University of
Iowa.

\section{Basic estimates}

\subsection{Some notation}
We write $X \lesssim Y$ or $Y \gtrsim X$ to indicate $X \leq CY$ for
some non-essential constant $C>0$.  We use ${\mathcal O}(Y)$ to denote any
quantity $X$ such that $|X| \lesssim Y$.  We use the notation $X
\sim Y$ whenever $X \lesssim Y \lesssim X$.  The fact that these
constants depend upon the dimension $d$ will be suppressed.  If $C$
depends upon some additional parameters, we will indicate this with
subscripts; for example, $X \lesssim_u Y$ denotes the assertion that
$X \leq C_u Y$ for some $C_u$ depending on $u$. Sometimes when the
context is clear, we will suppress the dependence on $u$ and write
$X \lesssim_u Y$ as $X \lesssim Y$. We will write $C=C(Y_1, \cdots,
Y_n)$ to stress that the constant $C$ depends on quantities $Y_1$,
$\cdots$, $Y_n$.

Let $I\subset \mathbb R$ be a time interval.
 We write $L^q_t L^r_{x}
(I\times \Omega)$ to denote the Banach space with norm
$$ \| u \|_{L^q_t L^r_x(I \times \Omega)} :=
\biggl(\,\int_I \biggl(\,\int_{\Omega} |u(t,x)|^r\ dx\biggr)^{q/r}\
dt\biggr)^{1/q},$$ with the usual modifications when $q$ or $r$ are
equal to infinity. 
When $q=r$ we abbreviate $L^q_t L^q_x$ as $L^q_{t,x}$. We shall
write $u \in L_{t,loc}^q L_x^r (I\times \Omega)$ if  $u \in L_t^q
L_x^r (J\times \Omega)$ for any compact $J\subset I$.

For any positive number $1\le a\le \infty$, we let $a^\prime=a/(a-1)$ denote
the conjugate of $a$, so that $1/a + 1/a^\prime =1$.


We use $\mathcal S(\R)$ to denote the space of Schwartz functions, and $\mathcal S'(\R)$ the
space of tempered distributions, on the real line.



\subsection{Fundamental solution and Strichartz estimates}
The spectral resolution for radial functions on the exterior domain $r\ge 1$ in $\R^3$
is expressed
using the radial eigenfunctions
\begin{align*}
\Delta \phi_\l + \lambda^2 \phi_\l =0
\end{align*}
for $\l>0$
which satisfy the Sommerfeld radiation condition,
namely
\begin{align*}
\phi_{\lambda}(r) = \frac {\sin \lambda (r-1)} {r}, \quad r\ge 1.
\end{align*}
For tempered distributions $f\in{\mathcal S}'(\R)$, 
we set
$$
\F f(\l)=\frac 1{\sqrt{\pi}}\int \frac {\sin \lambda (s-1)} {s}\,f(s)\,s^2ds
$$
which can be expressed in terms of the Fourier transform of $s f(s)$
to identify $\F f$ as an odd element of ${\mathcal S}'(\R)\,.$
If $f\in {\mathcal S}(\R)$, then $\F f$ is a Schwartz function of $\l$.

We observe the following resolution of identity:
\begin{align*}
 \frac 1 \pi \int_{-\infty}^\infty \phi_{\lambda} (r) \phi_{\lambda}(s)\, d\lambda
= & \;\frac 1 {2\pi r s} \int_{-\infty}^\infty \cos \l(r-s)-\cos\l(r+s-2) \; d\lambda \\
= & \;\frac{\delta (r-s) +\delta (r+s -2) } {rs} \\
= & \;\frac{\delta (r-s)} {s^2}\,, \quad \text{for}\; r,s >1\,,
\end{align*}
from which it follows by a limiting procedure
that $\F^* \F f=f$ for $f\in {\mathcal S}$ supported in $[1,\infty)\,,$
where  $\F^*$ is the formal adjoint, defined on
tempered distributions $g$ as the restriction to $r\ne 0$ of
$$
\F^*g=\frac 1{\sqrt{\pi}}\int_{-\infty}^\infty \frac{\sin \l(r-1)}r\,g(\l)\,d\l\,.
$$
Consequently, $f\rightarrow \F_0f=\sqrt{2}\,\F f|_{\l>0}$ induces an isometric map
\begin{equation}\label{plancherel}
\F_0\,:\,L^2([1,\infty),s^2ds)\rightarrow L^2([0,\infty),d\l)\,.
\end{equation}
One can similarly verify that if $g$ is an odd element of ${\mathcal S}(\R)$, then
\begin{equation}\label{plancherel2}
{ \frac 2 \pi}
\int_1^\infty \int_0^\infty \frac{\sin\l(r-1)}r\,\frac{\sin\mu(r-1)}r\,g(\mu)\,d\mu\,r^2dr=g(\l)\,,
\end{equation}
hence $\F_0$ in \eqref{plancherel} is onto, and thus an isomorphism of Hilbert spaces.

We will also use the radial inhomogeneous Sobolev space $\dot H^1_0(\Omega)$, defined as
the closure of $C_c^\infty([1,\infty))$ in the norm
$$
\|f\|_{\dot H^1_0}=\|f'(r)\|_{L^2(r^2dr)}\,.
$$
By Sobolev embedding on $\R^3$, for compactly supported $f$ we have
$$
\|f\|_{L^6(r^2dr)}\le \|f'\|_{L^2(r^2dr)}\,.
$$
Conversely, if $f\in L^6(r^2dr)$ and $f'\in L^2(r^2dr)$, then
$\chi(N^{-1} r) f$ converges to $f$ in the $\dot H^1_0$ norm, hence we may identify
$\dot H^1_0(\Omega)$ as absolutely continuous functions on $[1,\infty)$ for which
$$
\|f'\|_{L^2(r^2 dr)}+\|f\|_{L^6(r^2dr)}<\infty\,,\qquad f(1)=0\,.
$$

If $f\in C_c^\infty(\Omega)$ is radial, then $\F_0(\Delta f)(\l)=-\l^2\F_0 f(\l)\,.$
On the other hand,
$$
\int_\Omega \bar{f}\Delta f\,dx=\int_\Omega |\nabla f|^2\,dx=\int_\Omega |f'|^2\,dx\,,
$$
hence by \eqref{plancherel}, $\F_0$ induces an isometric map
$$
\F_0\,:\,\dot{H}^1_0(\Omega)\rightarrow L^2([0,\infty),\l\,d\l)\,.
$$
The image contains odd Schwartz functions by \eqref{plancherel2}, hence is an
isomorphism of Hilbert spaces.

Finally, we observe that
\begin{equation}\label{radial_emb}
|f(r)|\le r^{-\frac 12}\Bigl(\int_r^\infty |f'(s)|^2\,s^2ds\Bigr)^{\frac 12}\,,\quad
\text{hence}\quad \||x|^\frac 12 f\|_{L^\infty}\le\|f\|_{\dot H^1_0}\,.
\end{equation}
It follows as an easy consequence that radial
$\dot H^1_0$ is an algebra under multiplication of functions.

For $f\in L^2+\dot H^1_0$ (see Section 2.3 below), we can express
the Schrodinger propagator $e^{it\Delta_D}$ as
\begin{equation*}
(e^{it \Delta_D} f ) (r,t) = \F_0^*\bigl(e^{-i\l^2 t}\F_0 f\bigr)(r)\,.
\end{equation*}
The corresponding kernel is, where $t=\pm |t|$,
\begin{align*}
K(t,r,s)&=\frac 2\pi
\int_0^\infty \frac {\sin \lambda (r-1)} {r} \cdot
\frac{ \sin \lambda (s-1)} {s} \cdot e^{-i\lambda^2 t} \,d\lambda\\
&=\frac 1 {\pi rs}\int_{-\infty}^\infty \!\Bigl( e^{i\lambda(r-s)} - e^{i\lambda(r+s-2)} \Bigr) e^{-i\lambda^2 t} d\lambda\\
&= \frac {\pi^{\frac 12}e^{\pm i\pi/4}}{|t|^{\frac 12} r s}
 \left( e^{i(r-s)^2/4t} - e^{i (r+s-2)^2/4t } \right)\\
&= \frac {\pi^{\frac 12}e^{\pm i\pi/4}}{|t|^{\frac 12}rs}
 \left( 1-e^{i (r-1)(s-1)/t } \right)\,.
\end{align*}
It follows that $|K(t,r,s)|\le C\,|t|^{-3/2}$ for $r,s\ge 1$.
By a density argument, we thus have the important
\begin{lem}[Dispersive estimate] \label{lem_disp}
For $t\ne 0$ and radial $f \in L^2+\dot H^1_0$, we have
\begin{align*}
\| e^{it\Delta_D} f \|_{L^\infty (\Omega)} \lsm \frac 1 {|t|^{\frac
32}} \| f\|_{L^1(\Omega)}.
\end{align*}
\end{lem}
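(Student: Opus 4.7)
The plan is straightforward since the paragraphs preceding the lemma have already supplied the essential ingredient: an explicit kernel representation $K(t,r,s)$ for the Dirichlet propagator acting on radial functions, together with the pointwise bound $|K(t,r,s)| \leq C|t|^{-3/2}$ for $r,s \geq 1$. That bound in turn follows by applying the elementary inequality $|1 - e^{i\theta}| \leq |\theta|$ with $\theta = (r-1)(s-1)/t$ to the factor $1 - e^{i(r-1)(s-1)/t}$ in the closed-form expression for $K$, producing an extra factor $(r-1)(s-1)/(rs) \leq 1$ that upgrades the prefactor $|t|^{-1/2}$ to $|t|^{-3/2}$. The remaining work is to promote this pointwise kernel bound to an $L^1 \to L^\infty$ operator bound and then extend by density.

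First, for any radial $f \in C_c^\infty(\Omega)$ one has the representation $(e^{it\Delta_D} f)(r) = \int_1^\infty K(t,r,s)\,f(s)\,s^2\,ds$, and the uniform kernel bound gives
\begin{align*}
|(e^{it\Delta_D} f)(r)| \;\leq\; C\,|t|^{-3/2}\int_1^\infty |f(s)|\,s^2\,ds \;\lesssim\; |t|^{-3/2}\,\|f\|_{L^1(\Omega)}.
\end{align*}
Taking the supremum in $r$ yields the desired inequality on this dense subclass.

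The remaining step is a density argument to extend the estimate to radial $f \in L^2 + \dot H^1_0$ with finite $L^1$ norm (if the $L^1$ norm is infinite the inequality is vacuous). I would approximate $f$ by radial $f_n \in C_c^\infty(\Omega)$ with $f_n \to f$ simultaneously in $L^1(\Omega)$ and in $L^2 + \dot H^1_0$. Since $e^{it\Delta_D}$ is an isometry of $L^2$ and of $\dot H^1_0$ via the $\F_0$ conjugation established earlier in the excerpt, $e^{it\Delta_D} f_n$ converges to $e^{it\Delta_D} f$ in $L^2 + \dot H^1_0$ and hence, along a subsequence, pointwise a.e.; the uniform bound $\|e^{it\Delta_D} f_n\|_{L^\infty} \lesssim |t|^{-3/2}\,\|f_n\|_{L^1}$ then passes to the limit. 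The main nuisance, such as it is, will be constructing the approximating sequence so that it respects the Dirichlet condition $f(1) = 0$ while converging in both norms, which is handled by combining a cutoff near $r=1$ with a tail cutoff at large $r$ and standard radial mollification.
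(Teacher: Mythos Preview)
Your proposal is correct and follows exactly the paper's approach: derive the pointwise kernel bound $|K(t,r,s)|\lesssim |t|^{-3/2}$ from the explicit formula (the paper simply asserts this; you supply the one-line justification via $|1-e^{i\theta}|\le|\theta|$ and $(r-1)(s-1)\le rs$) and then extend from smooth compactly supported data by density. The paper's own proof of the lemma is literally the phrase ``By a density argument,'' so you have written out more detail than the paper does.
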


Strichartz estimates for radial data follow directly from this
dispersive estimate. See \cite{tao:keel} for instance. Therefore we
have the following lemma whose proof will be omitted.

%
%
%
\begin{lem}\label{strichartz}
Let $I$ be a time interval containing $0$. Let $u(t,x)$ satisfy
\begin{align*}
u(t,\cd) = e^{it \Delta_D} u_0
-i \int_0^t e^{i(t-s) \Delta_D} f(s,\cd) \,ds\,, \quad\forall\, t \in I\,,
\end{align*}
where $u_0\in L^2+\dot H^1_0\,, f\in L^1_t(L^2+\dot H^1_0)\,,$ with both radial.

Let $(q_i,r_i)$, $i=1,2$ be admissible pairs, such that $2\le
q_i\le\infty$, $\frac 2{q_i}+\frac 3{r_i}=\frac 32$. Then
\begin{align*}
\|u\|_{L_t^{q_1}L_x^{r_1}(I\times\Omega)}\lesssim
\|u_0\|_{L^2(\Omega)}+ \|f\|_{L_t^{q_2'}L_x^{r_2'}(I\times\Omega)}.
\end{align*}
Here $(q_2^\prime, r_2^\prime)$ are the conjugate exponents of
$(q_2,r_2)$.
\end{lem}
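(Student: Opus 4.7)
The plan is to reduce the statement to the abstract Strichartz machinery of Keel-Tao, for which two ingredients suffice: uniform boundedness of the propagator on $L^2$ and a dispersive $L^1\to L^\infty$ decay. The dispersive bound is exactly Lemma \ref{lem_disp}. The $L^2$ boundedness is essentially built into the spectral setup: on radial $L^2(\Omega)$, the transform $\F_0$ is an isometry onto $L^2([0,\infty),d\l)$ by \eqref{plancherel}, and $e^{it\Delta_D}$ is conjugated by $\F_0$ to multiplication by the unimodular symbol $e^{-i\l^2 t}$. Thus $e^{it\Delta_D}$ is unitary on radial $L^2(\Omega)$, and composing $U(t)=e^{it\Delta_D}$ with its adjoint gives $U(t)U(s)^*=e^{i(t-s)\Delta_D}$, to which Lemma \ref{lem_disp} applies with decay $|t-s|^{-3/2}$.

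With these two hypotheses in hand (uniform $L^2$ bound, and $L^1\to L^\infty$ decay with exponent $\sigma=3/2$), I would invoke the abstract Keel-Tao theorem. The admissibility condition $\frac{2}{q}+\frac{3}{r}=\frac{3}{2}$ with $2\le q\le\infty$ is exactly the scaling relation coming from $\sigma=3/2$; the endpoint $(q,r)=(2,6)$ is not the excluded $\sigma=1$ endpoint, so it is covered. This immediately yields the homogeneous estimate $\|e^{it\Delta_D}u_0\|_{L_t^{q_1}L_x^{r_1}}\lesssim\|u_0\|_{L^2}$, the dual estimate, and, by the standard $TT^*$-duality plus bilinear interpolation with a dyadic time decomposition that Keel-Tao carry out once and for all, the full inhomogeneous bound in the two admissible pairs $(q_1,r_1)$ and $(q_2,r_2)$.

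A density/approximation argument then extends the estimate from Schwartz radial data to the class $u_0\in L^2+\dot H^1_0$ and $f\in L^1_t(L^2+\dot H^1_0)$ described in the statement; the $\dot H^1_0$ component is irrelevant on the right-hand side and only enters to guarantee that the Duhamel formula makes sense. Finally, since both $u_0$ and $f$ are radial and $e^{it\Delta_D}$ preserves radiality, there is no issue in applying the radial dispersive estimate throughout.

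I do not expect any genuine obstacle: all of the work has been pushed into Lemma \ref{lem_disp} and the Plancherel identities \eqref{plancherel}, \eqref{plancherel2}. The only point deserving a moment of care is the verification that $e^{it\Delta_D}$, initially defined on $L^2+\dot H^1_0$, really coincides with the operator produced by $\F_0$-conjugation on the radial $L^2$ sector, so that unitarity on $L^2$ can actually be inserted as the Keel-Tao hypothesis; this follows from the explicit spectral representation already written down just above Lemma \ref{lem_disp}.
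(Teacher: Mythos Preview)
Your proposal is correct and matches the paper's approach exactly: the paper omits the proof entirely, simply noting that Strichartz estimates for radial data follow directly from the dispersive estimate Lemma~\ref{lem_disp} and citing Keel--Tao \cite{tao:keel}. Your write-up just spells out the two hypotheses (unitarity on radial $L^2$ via \eqref{plancherel}, and the $L^1\to L^\infty$ decay with $\sigma=3/2$) that feed into the abstract Keel--Tao theorem, which is precisely what the paper intends.
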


\subsection{Littlewood-Paley operators and Bernstein inequalities}
Given a bounded function $m(\l)$, which for convenience we assume
to be defined on all of $\R$ and even in $\l$, we define
$$
m\bigl(\sqrt{-\Delta_D}\,\bigr) f=
\F_0^*\bigl(m(\cdot)\,\F_0 f\bigr)\,.
$$
This defines a functional calculus on $L^2+\dot H^1_0$. In this section,
we will take $m$ to be an even function in $C_c^\infty(\R)$, in which case
$$
m\bigl(\sqrt{-\Delta_D}\,\bigr)f(r)=\int_1^\infty K_m(r,s)\,f(s)\,s^2ds\,,
$$
with
$$
K_m(r,s)=\frac 1\pi \cdot\frac{\widehat{m}(r-s)-\widehat{m}(r+s-2)}{rs}\,.
$$
In particular, we can define, for $N>0$, Littlewood-Paley projectors $P_N$ by taking $m=\psi(N^{-1}\lambda)$,
for suitable $\psi$ compactly supported away from 0. We similary define
$P_{\le N}$ using $m=\phi(N^{-1}\lambda)$, where $\phi\in C_c^\infty(\R)$ equals 1 on
a neighborhood of 0. We also set $P_{\ge N}=1-P_{\le N}$.

\begin{rem}
An added complication for this work, relative to the whole space case,
is that that spectral supports are not additive under function multiplication,
and thus we cannot exploit standard paraproduct decomposition results.


\end{rem}

As in the whole space case, we have the following

\begin{prop}[Bernstein inequality] \label{p:bernstein}
Let $1\le p\le q\le \infty$, and suppose $\sigma\in\R$. Then for any $N>0$
\begin{align}
\|P_{\le N} f\|_{L^q(\Omega)}\lesssim
&\; N^{3(\frac1p-\frac 1q)}
\|f\|_{L^p(\Omega)}\,,\label{bernstein1}\\
\|(-\Delta_D)^{\frac \sigma 2}P_Nf\|_{L^p(\Omega)}\approx
&\;N^\sigma \|P_N f\|_{L^p(\Omega)}\,.
\label{bernstein2}
\end{align}
\end{prop}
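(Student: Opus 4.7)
My plan is to deduce \eqref{bernstein1} from the explicit kernel formula for $P_{\le N}$ via Schur's test plus Riesz--Thorin interpolation, and then pass to \eqref{bernstein2} via functional calculus.

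For \eqref{bernstein1}, I specialize the kernel formula above to $m=\phi(\,\cdot\,/N)$, for which $\widehat{m}(x)=N\widehat\phi(Nx)$, obtaining
$$
K_{\le N}(r,s)=\frac{1}{\pi rs}\bigl[N\widehat\phi(N(r-s))-N\widehat\phi(N(r+s-2))\bigr].
$$
Since $\widehat\phi$ is Schwartz and $r,s\ge 1$, this yields for any $M$ the pointwise bound
$$
|K_{\le N}(r,s)|\lesssim_M \frac{N}{rs}\bigl[(1+N|r-s|)^{-M}+(1+N(r+s-2))^{-M}\bigr].
$$
Two consequences follow. First, the crude estimate $\sup_{r,s\ge 1}|K_{\le N}(r,s)|\lesssim N\le N^3$ from $rs\ge 1$ gives the endpoint $\|P_{\le N}\|_{L^1(\Omega)\to L^\infty(\Omega)}\lesssim N^3$. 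Second, a change of variables ($t=N(s-r)$ in the first kernel term and $t=N(r+s-2)$ in the second), combined with the rapid decay of $\widehat\phi$, yields the Schur integrals
$$
\sup_r\int_1^\infty |K_{\le N}(r,s)|\,s^2\,ds + \sup_s\int_1^\infty |K_{\le N}(r,s)|\,r^2\,dr\lesssim 1,
$$
so that $\|P_{\le N}\|_{L^p(\Omega)\to L^p(\Omega)}\lesssim 1$ uniformly in $N$ for every $p\in[1,\infty]$. Riesz--Thorin interpolation among the three estimates $L^1\to L^1$, $L^\infty\to L^\infty$, and $L^1\to L^\infty$ then produces the full Bernstein bound $N^{3(1/p-1/q)}$.

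For \eqref{bernstein2}, functional calculus gives $(-\Delta_D)^{\sigma/2}P_N=N^\sigma M_N$, where $M_N=m_\sigma(\sqrt{-\Delta_D}/N)$ and $m_\sigma(t)=|t|^\sigma\psi(t)$. Because $\psi$ is supported away from the origin, $m_\sigma$ is smooth, compactly supported, and even, so $M_N$ satisfies the same kernel estimate and Schur bound as $P_N$ and is uniformly $L^p$-bounded. To relate $M_N f$ and $P_N f$, I would pick smooth compactly supported even bumps $n_1,n_2$ on $\R$ with $n_1\psi=m_\sigma$ and $n_2 m_\sigma=\psi$ as identities on $\R$; such $n_i$ exist because $\psi$ and $m_\sigma$ share the same zero set, and $m_\sigma$ is nowhere zero on $\{\psi\neq 0\}$. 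Functional calculus then gives $M_N=n_1(\sqrt{-\Delta_D}/N)P_N$ and $P_N=n_2(\sqrt{-\Delta_D}/N)M_N$, and uniform $L^p$-boundedness of the $n_i$-multipliers yields $\|M_N f\|_{L^p}\sim\|P_N f\|_{L^p}$, which is \eqref{bernstein2}.

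The only nontrivial technical step is the Schur integral bound; the $L^1\to L^\infty$ endpoint is essentially free, since the constraint $r,s\ge 1$ already tames the $1/(rs)$ factor (in fact the sharp bound here is $\lesssim N$, but only $N^3$ is needed). More delicate cancellation between the two terms of $K_{\le N}$, which is forced at $r=1$ or $s=1$ by the Dirichlet boundary condition, plays no role in the stated inequalities.
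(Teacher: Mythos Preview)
Your overall strategy---Schur test for the $L^p\to L^p$ bounds, a kernel sup bound for the $L^1\to L^\infty$ endpoint, then interpolation---matches the paper's. However, the claim that the cancellation between the two terms in $K_{\le N}$ ``plays no role'' is incorrect, and this produces a genuine gap when $N<1$.

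For the $L^1\to L^\infty$ endpoint you write $\sup|K_{\le N}|\lesssim N\le N^3$, but $N\le N^3$ fails for $N<1$. The pointwise bound $\lesssim N$ (coming from $rs\ge 1$) is in fact sharp if one estimates the two kernel terms separately: for $r,s$ near $1$ and $N$ small, each term is $\approx N\widehat\phi(0)/\pi$, and only their \emph{difference} is $O(N^3)$. The paper obtains the required bound $\lesssim N^3$ by writing $\widehat\phi(t)=g(t^2)$ (using evenness) and applying the mean value theorem to $g$, which extracts exactly this cancellation.

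The same issue afflicts your Schur bound. Treating the first kernel term alone and changing variables as you suggest yields
\[
\int_1^\infty \frac{N}{rs}\,(1+N|r-s|)^{-M}\,s^2\,ds \;\lesssim\; 1+\frac{1}{Nr}\,,
\]
which is not uniformly bounded as $N\to 0$ (take $r=1$); the second term behaves the same way. The paper instead passes to variables $x=N(r-1)$, $y=N(s-1)$ and proves the combined estimate
\[
\int_0^\infty \bigl|\,\widehat\phi(y+x)-\widehat\phi(y-x)\bigr|\,y\,dy\;\lesssim\; x\,,
\]
using the mean value bound $|\widehat\phi(y+x)-\widehat\phi(y-x)|\lesssim x\,(1+y)^{-4}$ for $y>2x$. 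This is precisely the cancellation you dismissed.

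For $N\gtrsim 1$ your argument is fine and essentially coincides with the paper's, and your treatment of \eqref{bernstein2} via functional calculus is correct and in the same spirit. But to cover all $N>0$ as the proposition states, you must exploit the difference structure of the kernel.
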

\begin{proof} We first prove \eqref{bernstein1}.
We write
$$
(P_{\le N}f)(r)=\int_1^{\infty}K_N(r,s)f(s)s^2ds\,,
$$
where
$$
K_N(r,s)=\frac N\pi\cdot\frac{\widehat\phi\bigl(N(r-s)\bigr)
-\widehat\phi\bigl(N(r+s-2)\bigr)}{rs} \,,
$$
and we observe that $\widehat\phi$ is an even Schwartz function.
Since $K$ is symmetric, it suffices by the Schur test and
interpolation to show that
\begin{align}
\sup_r\|K_N(r,s)\|_{L^1(s^2ds)}&\le C\,,\label{l1est}
\\
\sup_{r,s}|K_N(r,s)|&\le C\, N^3\,.\label{linfest}
\end{align}
We pose $r=1+N^{-1}x$, $s=1+N^{-1}y$, where $x,y>0$. Since $\frac sr\le\frac yx$, then
\eqref{l1est} is implied by the bound
\begin{equation}\label{l1est'}
\int_0^\infty \bigl|\,\widehat\phi(y+x)-\widehat\phi(y-x)\bigr|\,y\,dy
\le C\,x\,.
\end{equation}
For $y>2x$, we can bound
$|\widehat\phi(y+x)-\widehat\phi(y-x)|\lesssim x(1+y)^{-4}$,
which establishes \eqref{l1est'} for the integral over $y>2x$. We write
the remaining piece as
\begin{multline*}
\int_{-x}^x \bigl|\,\widehat\phi(y+2x)-\widehat\phi(y)\bigr|\,(y+x)\,dy\\
\le 2\bigl(\,\|\widehat\phi\|_{L^1(\R)}+\|y\,\widehat\phi(y)\|_{L^\infty(\R)}\bigr)\,x+
\int_{-x}^x|\widehat\phi(y+2x)|\,y\,dy\,.
\end{multline*}
To bound the final term, we use that $y+2x>x$ on the region of integration, to
see that $|\widehat\phi(y+2x)|\lesssim x^{-1}$, yielding a bound of $C\,x$ for this
term also.

For \eqref{linfest}, we
use the evenness of $\widehat\phi$ to write
$\widehat\phi(s)=g(s^2)$, where $g$ is Schwartz, to bound
$$
|K(r,s)|\le 4
N^3\left|\frac{g\bigl((x+y)^2\bigr)-g\bigl((x-y)^2\bigr)}{(x+y)^2-(x-y)^2}\right|\,.
$$
The inequality \eqref{linfest} follows, where we bound $|g'|\le C/4$.

Relation \eqref{bernstein2} follows similarly, by writing
$\l^\sigma\psi(N^{-1}\l)=N^\sigma \tilde\psi(N^{-1}\l)\psi(N^{-1}\l)$ where
$\tilde\psi\in C^\infty_c(\R)$, and applying \eqref{l1est} for the kernel associated
to $\tilde\psi(N^{-1}\l)$.
\end{proof}

\subsection{$L^p$ based Sobolev spaces} \label{subsec_sob}
We will also have need to work with the inhomogeneous
Sobolev norm for radial functions,
$\|f\|_{\dot H_0^{1,p}(\Omega)}=\|\nabla f\|_{L^p(\Omega)}=\|f'\|_{L^p(s^2ds)}$.
The difficulty with using this norm is that $\nabla$ does not commute with
$e^{it\Delta_D}$. This problem is solved by proving an equivalence
$$
\|f\|_{\dot H_0^{1,p}(\Omega)}\approx
\|(-\Delta_D)^{\frac 12}f\|_{L^p(\Omega)}\,.
$$
If $p>3$ this cannot hold for all $f\in \dot H^{1,p}_0$, as seen by taking
$f=\frac{\sin\l(r-1)}{\l r}\,,$ where the right side tends to 0 as $\lambda\rightarrow 0$,
but the left side remains bounded below. However, we shall only need to apply this equivalence for $1<p<3$ and for
$f\in \dot H^{1,2}_0=\dot H^1_0$, for which it does hold.
Note that in this case, both $|\nabla f|=|f'|$ and $(-\Delta_D)^{\frac 12} f$ belong to
$L^2(\Omega)$, so both sides of the equivalence are well defined.

\begin{prop}\label{sobolev}
Let $1< p< 3$. Then there exists a constant $0<C_p<\infty$, such
that for any radial function $f\in \dot H^1_0$, we have
\begin{equation*}
C_p^{-1} \| \nabla f \|_{L^p(\Omega)}  \le
\| (-\Delta_D)^{\frac 12} f \|_{L^p(\Omega)}  \le C_p \,\| \nabla f \|_{L^p(\Omega)}\,.
\end{equation*}
\end{prop}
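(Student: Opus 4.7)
The plan is to reduce the equivalence to a weighted $L^p$ inequality on the half-line. For radial $f\in\dot H^1_0(\Omega)$ with $f(1)=0$, set $g(r)=rf(r)$, so $g(1)=0$. The spectral decomposition of $-\Delta_D$ on radial functions uses $\phi_\l(r)=r^{-1}\sin\l(r-1)$, and the products $r\phi_\l = \sin\l(r-1)$ are eigenfunctions of the half-line Dirichlet Laplacian $A := -\partial_r^2$ with zero boundary condition at $r=1$. Thus the map $f\mapsto rf$ intertwines $-\Delta_D$ with $A$, and by functional calculus,
\[
r\,(-\Delta_D)^{1/2}f = A^{1/2}g, \qquad r\,\partial_r f = g'(r) - g(r)/r.
\]
Since $\|h\|_{L^p(\Omega)}^p \sim \int_1^\infty |h|^p\, r^2\,dr$ for radial $h$, the asserted equivalence becomes
\[
\bigl\|\,g' - g/r\,\bigr\|_{L^p([1,\infty),\,r^{2-p} dr)} \;\approx\; \bigl\|\,A^{1/2}g\,\bigr\|_{L^p([1,\infty),\,r^{2-p} dr)}
\]
for all $g$ with $g(1) = 0$.

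The key step is to identify $A^{1/2}$ with a full-line Fourier multiplier via odd reflection. Extending $g$ oddly across $r=1$ to a function $\tilde g$ on $\R$, one has $A^{1/2}g = |D_x|\tilde g|_{[1,\infty)}$ and $\partial_r g = \partial_x\tilde g|_{[1,\infty)}$, and the full-line identity $|D_x| = H\partial_x$ (with $H$ the Hilbert transform) exhibits $A^{1/2}g$ and $\partial_r g$ as $H$-conjugate on the reflected domain. The equivalence between their $L^p$-norms on $([1,\infty), r^{2-p} dr)$ therefore reduces to the $L^p(\R,\tilde w)$-boundedness of $H$, where $\tilde w(x) = (1+|x|)^{2-p}$ is the even extension of the weight $r^{2-p}$. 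A direct check of the Muckenhoupt condition shows $\tilde w \in A_p(\R)$ precisely when $-1 < 2-p < p-1$, i.e.\ $3/2 < p < 3$; in this range, the weighted Hilbert bound applies. The residual term $g/r$ is absorbed via a standard weighted Hardy inequality on $[1,\infty)$, which is valid under the same hypothesis.

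The main obstacle is extending the argument to the subrange $1 < p \le 3/2$, in which $\tilde w$ fails the $A_p$ condition. In this case, I would bypass the full-line reduction and instead bound the integral kernel of $T := \partial_r(-\Delta_D)^{-1/2}$ directly, using its explicit spectral representation through the $\phi_\l$. Viewing $T$ as a singular integral on the space of homogeneous type $([1,\infty), r^2 dr)$ and verifying the Calder\'on--Zygmund size and H\"ormander regularity conditions by direct computation with the explicit sine/cosine kernels should yield the required $L^p$-boundedness across the full interval; the upper endpoint $p=3$ is naturally sharp, matching the breakdown of the $A_p$ property at that exponent.
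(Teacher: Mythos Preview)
Your reduction via $g=rf$ and odd reflection is correct and, for $3/2<p<3$, gives a clean alternative to the paper's argument: the paper instead computes the explicit kernel $K_1(r,s)$ taking $f'\mapsto(-\Delta_D)^{1/2}f$, subtracts off the whole-space Riesz kernel $K_0$ (which is $L^p$-bounded for all $1<p<\infty$), and bounds the remainder $K_1-K_0$ and its transpose by direct elementary estimates. Your route trades that computation for Muckenhoupt $A_p$ theory and a weighted Hardy inequality, which is conceptually elegant; the paper's route is more hands-on but entirely self-contained.

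The genuine gap is your treatment of $1<p\le 3/2$. If $T=\partial_r(-\Delta_D)^{-1/2}$ were a Calder\'on--Zygmund operator on the homogeneous-type space $([1,\infty),r^2\,dr)$ in the standard sense (size plus H\"ormander regularity in both variables), it would be bounded on $L^p(\Omega)$ for \emph{all} $1<p<\infty$. But the paper exhibits the family $f=\frac{\sin\l(r-1)}{\l r}$ showing that $\|\nabla f\|_{L^p}\lesssim\|(-\Delta_D)^{1/2}f\|_{L^p}$ fails for $p>3$; hence the kernel of $T$ cannot satisfy the standard CZ conditions, and your proposed verification would not go through. There is nothing in unweighted CZ theory on a homogeneous-type space that naturally singles out the endpoint $p=3$---that endpoint came from the $A_p$ range in your weighted picture, which you have explicitly abandoned in this step.

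What the paper actually does for small $p$ is different in kind: in the direction $(-\Delta_D)^{1/2}f=\int K_1(r,s)f'(s)\,s^2ds$, it exploits that $\int_1^\infty f'(s)\,ds=0$ (since $f(1)=f(\infty)=0$), so one may add any $k(r)s^{-2}$ to $K_1$ without changing the operator on derivatives. Subtracting the explicit term $2/(r^2s^2)$ is precisely what extends the kernel bound down to $1<p<3$; the reverse inequality for small $p$ then follows by duality from the transpose kernel. Your proposal does not identify this cancellation mechanism, and standard CZ theory will not supply it.
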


\begin{proof}
We first establish that
\begin{equation}\label{ineq1}
\| (-\Delta_D)^{\frac 12} f \|_{L^p(\Omega)} \lsm \| f'\|_{L^p(\Omega)}\,.
\end{equation}
We will establish this under the assumption that $f(r)\in C_c^\infty([1,\infty))$.
To establish it for general $f$, we take a sequence
$f_j\in C_c^\infty([1,\infty))$ with $\|f'_j-f'\|_{L^q(\Omega)}\le 2^{-j}$ for
both $q=2$ and $q=p$. It follows that
$\|(-\Delta_D)^{\frac 12}(f_j-f)\|_{L^2(\Omega)}\lesssim 2^{-j}$, hence
$$(-\Delta_D)^{\frac 12}f_j\rightarrow (-\Delta_D)^{\frac 12}f\quad
\text{pointwise}\;\,a.e.
$$
The result for general $f\in \dot H^1_0$ then follows by Fatou's lemma.

For $f\in C_c^\infty$ we write
\begin{multline*}
\Bigl( (-\Delta_D)^{\frac 12} f \Bigr)(r)
  = \frac 2\pi\int_0^\infty
\int_1^\infty \frac{\sin \lambda (r-1)} r \, \frac {\sin \lambda
(s-1)} {s}\,
 \lambda f(s)\,s^2 ds\,d\lambda \\
= \frac 2\pi \int_0^\infty \int_1^\infty \frac{\sin \lambda (r-1)} {r}
\Bigl( s \cos \lambda(s-1) -
\frac 1 {\lambda} \sin \lambda (s -1) \Bigr) f^\prime
(s)\, ds\,d\l\,.
\end{multline*}

By considering the limit of the truncated integrals over $\l$, we obtain
\begin{align*}
\Bigl( (-\Delta_D)^{\frac 12} f \Bigr)(r) =
-\frac 2\pi\int_1^\infty K_1(r,s) \,f^\prime (s)\, s^2ds\,,
\end{align*}
where
$$
K_1(r,s) = \frac{1}{rs} \Biggl(\frac 1{r+s-2} + \frac 1 {r-s} \Biggr)
+ \frac 1 {rs^2} \log \left| \frac{r-s}{r+s-2} \right|\,,
$$
and the first term is interpreted as a principal value integral.
We note that this kernel is only applied to functions $f'(s)$ of integral $0$, hence
we can add a function $k(r)s^{-2}$ to $K_1(r,s)$ without changing the result. This
will indeed be necessary for small $p$.

A similar computation, using the whole-space spectral decomposition for radial functions
$$
\Bigl( (-\Delta)^{\frac 12} f \Bigr)(r)
  = \frac 2\pi \int_0^\infty
\int_0^\infty \frac{\sin \lambda r} r \, \frac {\sin \lambda
s} {s}\,
 \lambda f(s)\,s^2 ds\,d\lambda\,,
$$
expresses $(-\Delta)^{\frac 12}f(r)=-\frac 2\pi\int_0^\infty K_0(r,s)\,f'(s)\,s^2ds$,
with the kernel
$$
K_0(r,s) = \frac{1}{rs} \Biggl( \frac 1{r+s} + \frac 1 {r-s} \Biggr)
+ \frac 1 {rs^2} \log \left| \frac{r-s} {r+s} \right|\,.
$$
This kernel is bounded on $L^p(s^2ds)\,,\;1<p<\infty$,
since\footnote{This can be verified directly by elementary computation. Alternatively,
the two operators must agree on $g$ of integral $0$, determining $K_0(r,s)$ up to
$k(r)s^{-2}$. Since $K_0$ decrease like $s^{-3}$ as $s\rightarrow\infty$,
then necessarily $k(r)=0$.} it represents the operator
$$
g\;\rightarrow\;\sum_{j=1}^3 R_j\Biggl(\frac {x_j}{|x|}\,g\Biggr)\,,
$$
where $R_j$ is the Riesz transform $\partial_{x_j}(-\Delta)^{-\frac 12}$ on $\R^3$.

We are thus reduced to proving $L^p([1,\infty),s^2ds)$ boundedness of the
kernel $K=K_1-K_0$, that is
\begin{equation}\label{kform}
K(r,s)=\frac{1}{rs} \left( \frac 1{r+s-2} -\frac 1{r+s} \right)
+ \frac 1 {rs^2} \log \left( \frac{r+s}{r+s-2} \right)\,,
\end{equation}
with the freedom to add $k(r)s^{-2}$ to $K(r,s)$. Note that both terms on the right
of \eqref{kform}
are non-negative, hence can be considered separately. The $L^p$ boundedness of the first
term is based on the bound, for $1\le p< \infty$,
$$
\left\|\frac{2}{rs(r+s-2)(r+s)} 
\right\|_{L^{p'}\!(s^2ds)}
\lesssim\;\;
\bigl[r^2(r-1)\bigr]^{-p}\,,\quad r>1\,.
$$
This shows that the corresponding operator is weak-type $(p,p)$ for $1\le p<\infty$,
hence strong-type $(p,p)$ for $1<p<\infty$.

For the second term on the right of \eqref{kform},
we note that if $1\le r\le 2$ then for each $1<p'<\infty$ the $L^{p'}\!(s^2ds)$
norm is bounded uniformly in $r$.

For $r>2$, we write the second term as
\begin{equation}\label{logapprox}
-\frac{1}{rs^2}\log\left(1-\frac 2{r+s}\right)=\frac 2{rs^2(r+s)}+
{\mathcal O}\left(  \frac 1{rs^2(r+s)^2}    \right)\,.
\end{equation}
The second term on the right hand side of \eqref{logapprox}
is bounded by the first term in $K(r,s)$ considered
above. If $1<p'< 3$, then we dominate the $L^{p'}$ norm of the first term on the right
of \eqref{logapprox} by
$$
\left\|\frac{1}{s(r+s)^2}\right\|_{L^{p'}\!((0,\infty),s^2ds)}=C\,r^{-\frac 3p}\,.
$$
This implies strong-type $(p,p)$ bounds
for the second term in \eqref{kform} if $\frac 32<p<\infty$.

To obtain $(p,p)$ bounds for remaining $p$, we consider $1<p<3$, and
subtract the kernel $2/r^2s^2$. Since this kernel is bounded in $L^{p'}\!(s^2 ds)$
for $p'>\frac 32$, it does not affect the above consideration for $1<r<2$.
For $r>2$, we are reduced to considering
$$
\frac 2{r^2s^2}-\frac 2{rs^2(r+s)}=\frac {2}{r^2s(r+s)}\le \frac 4{r(r+s)^2}\,.
$$
We conclude by observing that, for $p'>\frac 32$,
$$
\left\|\frac{1}{r(r+s)^2}\right\|_{L^{p'}\!((0,\infty),s^2ds)}=C\,r^{-\frac 3p}\,,
$$
which yields the strong-type $(p,p)$ bounds for $1<p<3$. Note that we have in fact
established \eqref{ineq1} for all $1<p<\infty$.

To show the reverse implication, for $1<p<3$,
\begin{equation}\label{otherway}
\| \nabla f \|_{L^p(\Omega)} \lsm \| (-\Delta_D)^{\frac 12} f \|_{L^p(\Omega)}\quad
\text{if}\;\;f\in\dot H^1_0\,,
\end{equation}
we consider $f_N=P_{\le N}f$.
By \eqref{bernstein1},
$$
\| (-\Delta_D)^{\frac 12} f_N \|_{L^p(\Omega)}\lesssim
\| (-\Delta_D)^{\frac 12} f \|_{L^p(\Omega)}\,.
$$
Since $\|f'_N-f'\|_{L^2(\Omega)}\rightarrow 0$, for some subsequence $f'_{N_j}(r)\rightarrow f'(r)$ pointwise a.e. By Fatou's lemma, it thus suffices to
prove \eqref{otherway} for $f\in\dot H^1_0$ with compact spectral support in $[0,\infty)$.
Such functions are smooth, as is $(-\Delta_D)^{\frac 12}f$.

For such $f$, we can write
\begin{align*}
f'(r) & = \frac 2\pi\int_0^\infty \frac \partial {\partial r} \left( \frac {\sin \lambda(r-1) } {\l r} \right) \F_0 \Bigl(\bigl(-\Delta_D\bigr)^{\frac 12} f\Bigr)(\l)\, d\lambda \\
& = \frac 2\pi\int_0^\infty
\left(\frac {\cos \lambda(r-1)} {r} - \frac{\sin \lambda(r-1)}{\l\, r^2}
\right) \F_0 \Bigl(\bigl(-\Delta_D\bigr)^{\frac 12} f\Bigr)(\l)\, d\lambda
\end{align*}
This can in turn be written as
$$
\frac 2\pi
\int_0^\infty K^T_1(r,s)\,\Bigl(\bigl(-\Delta_D\bigr)^{\frac 12} f\Bigr)(s)\,s^2ds\,,
$$
where $K_1^T$ is the transpose of the above kernel $K_1$,
$$
K^T_1(r,s) = \frac 1 {rs} \Bigl( \frac 1 {r+s-2} - \frac 1 {r-s} \Bigr)
+ \frac 1 {r^2 s} \log \left| \frac {r-s} {r+s-2}\right|\,.
$$
Subtracting off $K_0^T(r,s)$ reduces matters to establishing bounds for the kernel
$$
K^T(r,s)=\frac{1}{rs} \left( \frac 1{r+s-2} -\frac 1{r+s} \right)
+ \frac 1 {r^2s} \log \left( \frac{r+s}{r+s-2} \right)\,.
$$
The first term is the same as above. The second term gives a bounded integral operator
for $1<p<3$, since its transpose is bounded on $\frac 32<p<\infty$.
\end{proof}

\section{Proof of Theorem \ref{thm:main}}
We begin by making the definition of the solution more precise. Let
$I$ be a finite time interval containing $0$.
As remarked above, \eqref{radial_emb} implies that radial $\dot H^1_0$ is
closed under multiplication, so by $\dot H^1_0$ boundedness of $\exp(it\Delta_D)$
we have
\begin{equation}\label{energy_bound}
\left\|\,\int_0^t e^{i(t-s)\Delta_D}|u|^4 u(s)\,ds \,\right\|_{L_t^{\infty}\dot
H_0^1(I\times \Omega)} \lsm |I|\cdot \|u\|^5_{L_t^{\infty}\dot H_0^1(I\times\Omega)}.
\end{equation}
Therefore, if $u\in C(I;\dot H_0^1(\Omega))$, then the inhomogeneous
term will  also be in $\dot H_0^1(\Omega)$. This motivates the
following

\begin{defn}[Solution] Denote $F(u)=|u|^4 u$.
A radial function $u: I \times \Omega \to \C$ on a non-empty time
interval $I \subset \R$ (possibly infinite or semi-infinite) is a
\emph{strong $\dot H_0^1(\Omega)$ solution} (or \emph{solution} for
short) to \eqref{nls} if it lies in the class $C^0_t \dot H_0^1(I
\times \Omega)$, and we have the Duhamel formula
\begin{align}\label{old duhamel}
u(t_1) = e^{i(t_1-t_0)\Delta_D} u(t_0) - i \int_{t_0}^{t_1}
e^{i(t_1-t)\Delta_D} F(u(t))\,dt
\end{align}
for all $t_0, t_1 \in I$. We refer to the interval $I$ as the
\emph{lifespan} of $u$. We say that $u$ is a \emph{maximal-lifespan
solution} if the solution cannot be extended to any strictly larger
interval. We say that $u$ is a \emph{global solution} if $I = \R$.
\end{defn}

Using \eqref{energy_bound} we can
easily construct the local solution of \eqref{nls} using a fixed
point argument in $C^0_t\dot H_0^1(\Omega)$.
Moreover, the lifespan of the local solution depends only on the
$\dot H_0^1(\Omega)$ norm of the initial data. Existence of the global
solution then follows quickly from the energy
conservation property of the defocusing equation \eqref{nls}.
Specifically, we have the following.

\begin{thm}[Global well-posedness]\label{thm:local}
Let $u_0\in \dot H_0^1(\Omega)$ be spherically symmetric. Then there
exists a unique global solution $u\in C(\R;\dot H_0^1(\Omega))$.
Moreover, $\nabla u\in L_{t, loc}^qL_x^r(\R\times\Omega)$ for any
admissible pair $(q,r)$, if $r<3$. For any $t\in\R$, we have
\begin{equation*}
E(u(t))=\frac 12\int_{\Omega}|\nabla u(t,x)|^2\,dx+\frac
16\int_{\Omega}|u(t,x)|^6 \,dx=E(u_0)\,.
\end{equation*}
For this global solution, scattering holds provided the global space-time $L^{10}$
norm is bounded. Precisely,
suppose that $u$ satisfies
$$
\|u\|_{L_{t,x}^{10}([0,\infty)\times \Omega)}<\infty\,.
$$
Then $u$ scatters forward in time, i.e.\ there exists unique $v_+\in \dot H_0^1(\Omega)$ such that
$$
\lim_{t\to \infty}\|e^{it\Delta_D} v_+-u(t)\|_{\dot H_0^1(\Omega)}=0\,.
$$
The same statement holds backward in time.
\end{thm}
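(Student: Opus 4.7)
The plan is a three-step program: local theory by contraction in $C_t^0\dot H_0^1$, global extension via the defocusing energy, and then a Strichartz bootstrap under the $L_{t,x}^{10}$ hypothesis that produces scattering.

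First I would run a contraction mapping on
\begin{equation*}
\Phi(u)(t)=e^{it\Delta_D}u_0-i\int_0^t e^{i(t-s)\Delta_D}F(u(s))\,ds
\end{equation*}
in the ball of radius $2\|u_0\|_{\dot H_0^1}$ of $C_t^0\dot H_0^1(I\times\Omega)$, using that radial $\dot H_0^1$ is a multiplicative algebra (from \eqref{radial_emb}) and that $e^{it\Delta_D}$ acts isometrically on $\dot H_0^1$. The bound \eqref{energy_bound} makes $\Phi$ a contraction as soon as $|I|\lesssim\|u_0\|_{\dot H_0^1}^{-4}$, producing a unique local solution whose lifespan depends only on $\|u(t_0)\|_{\dot H_0^1}$. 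Pairing \eqref{nls} with $\overline{\partial_t u}$, taking real parts, and integrating over $\Omega$ (the boundary term vanishing by the Dirichlet condition) gives $\frac{d}{dt}E(u)=0$; in the defocusing case both summands of $E$ are non-negative, so $\|\nabla u(t)\|_{L^2}\le\sqrt{2E(u_0)}$ uniformly, and the local theory iterates to all of $\R$.

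For the regularity claim $\nabla u\in L_{t,loc}^q L_x^r$ with $r<3$, I would use Proposition \ref{sobolev} to replace $\nabla$ by the spectral power $(-\Delta_D)^{1/2}$, which now commutes with the propagator, and then apply Lemma \ref{strichartz} to $(-\Delta_D)^{1/2}u$ on a short subinterval $J$. Matters reduce to a product estimate
\begin{equation*}
\bigl\|(-\Delta_D)^{1/2}F(u)\bigr\|_{L_t^{q'}L_x^{r'}(J\times\Omega)}\lesssim\|u\|_{L_{t,x}^{10}(J\times\Omega)}^4\,\bigl\|(-\Delta_D)^{1/2}u\bigr\|_{L_t^qL_x^r(J\times\Omega)},
\end{equation*}
which I would establish by invoking Proposition \ref{sobolev} a second time to pass to ordinary derivatives, then using the pointwise chain rule $|\nabla F(u)|\lesssim|u|^4|\nabla u|$, H\"older, and the Sobolev embedding $\dot H_0^1(\Omega)\hookrightarrow L^6(\Omega)$.

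Under the hypothesis $\|u\|_{L_{t,x}^{10}([0,\infty)\times\Omega)}<\infty$, I would partition $[0,\infty)$ into finitely many subintervals on each of which $\|u\|_{L_{t,x}^{10}}\le\eta$ for $\eta=\eta(E(u_0))$ small enough to absorb the nonlinear term; the previous step then yields a uniform Strichartz bound on $(-\Delta_D)^{1/2}u$ on each subinterval, which sums to a finite global bound. The Duhamel identity gives
\begin{equation*}
\bigl\|e^{-it_2\Delta_D}u(t_2)-e^{-it_1\Delta_D}u(t_1)\bigr\|_{\dot H_0^1(\Omega)}\lesssim\bigl\|(-\Delta_D)^{1/2}F(u)\bigr\|_{L_t^{q'}L_x^{r'}([t_1,t_2]\times\Omega)}\to 0
\end{equation*}
as $t_1,t_2\to\infty$, so $v_+:=\lim_{t\to\infty}e^{-it\Delta_D}u(t)$ exists and is unique in $\dot H_0^1$; the backward statement is identical. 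The main obstacle throughout is the product estimate on $(-\Delta_D)^{1/2}F(u)$: since standard paraproduct decomposition is unavailable here (per the remark after Proposition \ref{p:bernstein}), Proposition \ref{sobolev} is exactly what routes fractional derivatives back into ordinary gradients where the pointwise chain rule applies.
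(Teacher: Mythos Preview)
Your proposal is correct and matches the paper's approach essentially line for line: the paper constructs the local solution by contraction in $C_t^0\dot H_0^1$ via \eqref{energy_bound}, globalizes by energy conservation, and for the Strichartz/scattering claim refers to the bootstrap \eqref{Strichest}--\eqref{small_strichartz}, which is exactly your partition-and-absorb argument using Proposition \ref{sobolev} to trade $(-\Delta_D)^{1/2}$ for $\nabla$ and the pointwise chain rule. The only refinement worth noting is that the paper fixes the concrete pair $(q,r)=(8,\tfrac{12}{5})$ with nonlinear dual $(\tfrac{40}{21},\tfrac{60}{49})$ (both $r$'s below $3$, so Proposition \ref{sobolev} applies on each side), whereas your display with a single pair $(q,r)$ and its own dual $(q',r')$ would need to be read with two different admissible pairs to make the H\"older exponents close.
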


\smallskip

The fact that global $L^{10}_{t,x}$ control of
the norm implies finiteness of Strichartz norms and scattering is established by similar
steps to those leading from \eqref{Strichest} to \eqref{small_strichartz} below.
Furthermore, a standard continuity argument shows that if
$\|u_0\|_{\dot H_0^1} < \epsilon$ for small $\epsilon$,
then the corresponding solution scatters in both
time directions. (see\cite{cw} fro instance).

Due to Theorem \ref{thm:local}, the proof
of Theorem \ref{thm:main} is reduced to showing that the $L_{t,x}^{10}$
norm of the solution over any compact time interval is bounded by a constant
depending only on upper bounds for the initial energy.
Theorem \ref{thm:main} is thus a consequence of the following.

\begin{thm}\label{thm:spacetime}
Assume $u\in \dot H_0^1(\Omega)$ is a spherically symmetric solution of \eqref{nls} on a
compact interval $[t_-,t_+]$. Suppose $E(u_0)\le E$. Then
$$
\|u\|_{L_{t,x}^{10}([t_-,t_+]\times \Omega)}<C(E).
$$
\end{thm}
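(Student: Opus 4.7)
The plan is to follow the induction-on-energy framework of Bourgain \cite{borg:scatter} and Tao \cite{tao:radial} for the radial energy-critical NLS, adapting each step to the exterior domain through the tools developed in Section 2 (dispersive estimate Lemma \ref{lem_disp}, Strichartz Lemma \ref{strichartz}, Bernstein Proposition \ref{p:bernstein}, and the Sobolev equivalence Proposition \ref{sobolev}). First I would fix a small parameter $\eta=\eta(E)>0$ and partition $[t_-,t_+]$ into finitely many consecutive subintervals $I_j$ on each of which $\|u\|_{L_{t,x}^{10}(I_j\times\Omega)}\sim\eta$, together with at most one tail piece; the goal of the theorem becomes an a priori upper bound, depending only on $E$, on the total number $J$ of such subintervals. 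On each $I_j$ one uses Strichartz (Lemma \ref{strichartz}) and Proposition \ref{sobolev} to turn the $L_{t,x}^{10}$ bound into control of $\|\nabla u\|_{L_t^qL_x^r(I_j\times\Omega)}$ for the $(q,r)$ with $r<3$ allowed by Theorem \ref{thm:local}, in particular for the Strichartz-dual exponents needed to estimate the nonlinearity.

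Next I would prove a Morawetz-type inequality adapted to $\Omega$. Pairing \eqref{nls} against a radial multiplier of the form $a'(r)\partial_r u+\tfrac12 a''(r) u$ with $a(r)$ a suitable perturbation of $r$ (so that Dirichlet boundary terms at $r=1$ contribute with the right sign and can be discarded), one obtains a bound of the shape
\begin{equation*}
\int_{t_-}^{t_+}\!\int_{\Omega}\frac{|u(t,x)|^6}{|x|}\,dx\,dt\lesssim E,
\end{equation*}
uniformly in the interval. This is the principal a priori space-time estimate and replaces the interaction Morawetz inequality from the whole-space radial case; the boundary at $|x|=1$ is favorable because the obstacle is strictly convex, so the usual sign issue does not arise.

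The central obstacle, as flagged in the introduction, is the mass (really frequency/spatial) localization. In $\mathbb{R}^3$ one exploits the fact that $P_N$ is convolution with a Schwartz kernel, so low-frequency projections commute nicely with spatial cutoffs; here neither translation invariance nor any paraproduct decomposition is available. My approach is to use the explicit Littlewood-Paley kernel $K_N(r,s)$ derived in the proof of Proposition \ref{p:bernstein}, showing that its off-diagonal behavior is still essentially Schwartz in $N(r-s)$ plus a reflected piece in $N(r+s-2)$ coming from the boundary. With this, one proves the key lemma (Lemma \ref{lem:lowerbound} in the paper) that any subinterval $I_j$ carrying $\|u\|_{L_{t,x}^{10}(I_j\times\Omega)}\sim\eta$ must have length bounded below by a positive constant depending only on $E$: assuming the contrary, a Duhamel and Bernstein argument combined with the smallness of $|I_j|$ forces the high-frequency mass of $u$ on $I_j$ to concentrate in a small spatial ball whose location must interact favorably with the fixed boundary $|x|=1$, and the reflected piece of $K_N$ together with the Dirichlet condition precludes this. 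This is the step where I expect the real work to lie, and where the radial assumption plus the explicit spectral kernel are decisive.

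Finally, I would combine these ingredients via the Bourgain/Tao combinatorial bush argument. The lower bound on $|I_j|$ coming from the localization lemma forces intervals of comparable frequency scale to be spread out in time; plugging this into the Morawetz bound and iterating over dyadic frequency scales produces an explicit upper bound $J\le C(E)$ on the number of intervals. Summing $\|u\|_{L_{t,x}^{10}(I_j\times\Omega)}^{10}\lesssim \eta^{10}$ over $j\le J$ yields the desired estimate
\begin{equation*}
\|u\|_{L_{t,x}^{10}([t_-,t_+]\times\Omega)}\le C(E),
\end{equation*}
with a constant independent of the interval, completing the proof.
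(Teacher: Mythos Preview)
Your outline has the right overall shape but contains a substantive gap in the Morawetz step, and it conflates two distinct localization lemmas.

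The Morawetz bound you state, $\int_{t_-}^{t_+}\!\int_\Omega |u|^6/|x|\,dx\,dt\lesssim E$, is \emph{not} available for $\dot H^1_0$ data: the time-boundary term $\int_\Omega \textit{Im}(\partial_r u\,\bar u)\,dx$ is controlled by $\|\nabla u\|_2\|u\|_2$, and we have no $L^2$ control of $u$. (Indeed, as observed in Section~\ref{sec_inhomo}, if the global Morawetz held then interpolation with the radial Sobolev bound \eqref{radial_emb} would give the $L^{10}_{t,x}$ bound immediately and the entire Bourgain machinery would be unnecessary.) What one actually proves is the \emph{localized} estimate of Proposition~\ref{prop37}: for any interval $I$ and any $A\ge 1$,
\[
\int_I\int_{1\le|x|\le A|I|^{1/2}}\frac{|u|^6}{|x|}\,dx\,dt\lesssim A|I|^{1/2},
\]
obtained by taking $a(x)=|x|\,\phi_{<R}(x)$ with $R=A|I|^{1/2}$ and bounding the time-boundary term by $\|\nabla u\|_2\|u\|_6\|\nabla a\|_3\lesssim R$.

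Second, you identify Lemma~\ref{lem:lowerbound} (the uniform lower bound $|I_j|>\eta_1$) as the key localization result and describe it in terms of spatial concentration near the obstacle. In fact that lemma involves no spatial cutoffs at all: its proof splits the Duhamel piece $w(t)=\int_{t_-}^a e^{i(t-s)\Delta_D}F(u)\,ds$ into low and high frequencies, uses Bernstein on the low part and the dispersive estimate on the high part, and shows both are too small if $|I|$ were. The step that actually feeds the Morawetz/combinatorics machine is a separate \emph{mass concentration} lemma: on every unexceptional interval $I$ and for every $t\in I$,
\[
\int_{|x|<\eta_3^{-1}|I|^{1/2}}|u(t,x)|^2\,dx\ge c(\eta_2)\,|I|.
\]
This, combined with the localized Morawetz, yields $\sum_j|I_j|^{1/2}\le C(\eta_2,\eta_3)|J|^{1/2}$ for any contiguous union $J$ of unexceptional intervals, and it is \emph{this} inequality that Bourgain's pigeonhole argument on interval lengths (not an iteration over dyadic frequency scales) converts into a bound on the number of intervals. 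You should also build in the exceptional/unexceptional dichotomy: intervals on which the linear flow $u_\pm$ from the endpoints already carries nontrivial $L^{10}$ norm are counted separately and are finite in number for trivial reasons.
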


The rest of this section will be devoted to the proof of Theorem \ref{thm:spacetime}. We begin with some useful conventions.

\textbf{Convention}. Let $0<\eta_3\ll\eta_2\ll\eta_1\ll\eta_0\ll 1$ be small constants to be determined. We use $c(\eta_i)$ to denote a small
constant depending on $\eta_i$ such that $\eta_{i+1}\ll c(\eta_i)\ll \eta_i$.
We use $C(\eta_i)$ to denote a large constant such that $\frac 1{\eta_i}\ll C(\eta_i)\ll \frac 1{\eta_{i+1}}.$
The constants $c(\eta_i)$ and $C(\eta_i)$ will sometimes vary from line to line, but the dependence is clear
from the context.
The notation $a\lesssim b$ will be used to mean that $a\le C(E)\,b$, where $C(E)$ may depend
on the energy upper-bound $E$.

We will use $\phi(x)$ to denote a radial smooth cutoff function such that
\begin{align} \label{def_phi}
\phi(x)=
\begin{cases}
1, \quad \text{if $|x|\le 1$}\,, \\
0, \quad \text{if $|x|>2$}\,.
\end{cases}
\end{align}

We also denote $\phi_{<C}(x)=\phi(\frac xC)$, $\phi_{>C}=1-\phi_{<C}$.

\vspace{0.2cm}

Since $\|u\|_{L^\infty_tL_x^{10}([t_-,t_+]\times\Omega)}\lsm E$, we decompose
$$
[t_-,t_+]=\bigcup_{j=1}^{J} I_j
$$
such that
$$
\eta_0<\|u\|_{L_{t,x}^{10}(I_j\times \Omega)}\le 2\eta_0.
$$
By Strichartz estimates on $I_j$, we have
\begin{align}\notag
\|\nabla u\|_{L_t^8 L_x^{\frac{12}5}(I_j\times\Omega)}&\lesssim
\|u(t_i)\|_{\dot H_0^1(\Omega)}
+\|\nabla (|u|^4 u)\|_{L_t^{\frac{40}{21}}L_x^{\frac{60}{49}}(I_j\times\Omega)}\\ \label{Strichest}
&\lesssim 1+\|u\|_{L_{t,x}^{10}(I_j\times\Omega)}^4
\|\nabla u\|_{L_t^8 L_x^{\frac{12}5}(I_j\times\Omega)}\\ \notag
&\lesssim 1+\eta_0^4\|\nabla u\|_{L_t^8 L_x^{\frac{12}5}(I_j\times\Omega)}\,.
\end{align}

Note we have used Proposition \ref{sobolev} to deduce the first inequality. By taking
$\eta_0$ small we have
$$
\|\nabla u\|_{L_t^8 L_x^{\frac{12}5}(I_j\times\Omega)}\lsm 1\,.
$$
A further application of Strichartz estimates yields that, for admissible pairs $(q,r)$
with $r<3$,
\begin{align}\label{small_strichartz}
\|\nabla u\|_{L_t^qL_x^r(I_j\times\Omega)}\lesssim 1\,.
\end{align}

Now let $u_+(t)=e^{i(t-t_+)\Delta_D} u(t_+),\  u_-(t)=e^{i(t-t_-)\Delta_D}u(t_-)$. We distinguish between two cases for each $I_j$:

$\bullet$ $I_j$ is called exceptional if either
\begin{align*}
&\|u_+\|_{L_{t,x}^{10}(I_j\times \Omega)}>\eta_0^{10}\;\;\;\text{or}\\
&\|u_-\|_{L_{t,x}^{10}(I_j\times \Omega)}>\eta_0^{10}\,,
\end{align*}

$\bullet$ $I_j$ is called unexceptional if
$$
\|u_{\pm}\|_{L_{t,x}^{10}(I_j\times \Omega)} \le \eta_0^{10}\,.
$$
Since
$\|u_\pm\|_{L^{10}_{t,x}([t_-,t_+])\times\Omega)}\lesssim
\|\nabla u_\pm\|_{L^{10}_tL^{\frac{30}{13}}_x([t_-,t_+])\times\Omega)}
\lesssim \|u(t_\pm)\|_{\dot{H^1_0}}\,,$ the number of exceptional intervals is bounded by
$C(\eta_0,E)$. We thus need to control only the number of unexceptional intervals.

We first prove that the length of unexceptional intervals has a uniform lower bound.
This is in contrast to the whole-space case, where $|I_j|$
may be arbitrarily small.

\begin{lem} \label{lem:lowerbound}
Let $I$ be an unexceptional interval, then
$$
|I|>\eta_1.
$$
\end{lem}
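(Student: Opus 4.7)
The plan is to argue by contradiction: suppose $|I|\le\eta_1$ and derive $\|u\|_{L^{10}_{t,x}(I\times\Omega)}\ll\eta_0$, violating the defining property $\|u\|_{L^{10}_{t,x}(I\times\Omega)}>\eta_0$ of the decomposition. The quantitative input that makes the lemma true here — and not in the whole-space setting — is an improved $L^{10}_x$ embedding for radial $\dot H^1_0$ functions on $\Omega$.

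By the Duhamel identity with base point $t_-$, for every $t\in I$,
\[
u(t)-u_-(t)=-i\int_{t_-}^{t}e^{i(t-s)\Delta_D}F(u(s))\,ds\,.
\]
Since $I$ is unexceptional, $\|u_-\|_{L^{10}_{t,x}(I\times\Omega)}\le\eta_0^{10}$; combined with $\|u\|_{L^{10}_{t,x}(I\times\Omega)}>\eta_0$, the triangle inequality yields $\|u-u_-\|_{L^{10}_{t,x}(I\times\Omega)}\gtrsim\eta_0$. Hölder in time then gives
\[
\eta_0\,\lesssim\,\|u-u_-\|_{L^{10}_{t,x}(I\times\Omega)}\,\le\,|I|^{1/10}\,\|u-u_-\|_{L^\infty_t L^{10}_x(I\times\Omega)}\,.
\]

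It remains to bound $\|u-u_-\|_{L^\infty_tL^{10}_x}$ uniformly in $t$. For radial $f\in\dot H^1_0(\Omega)$, the pointwise bound $|f(r)|\le r^{-1/2}\|f\|_{\dot H^1_0}$ from \eqref{radial_emb} gives
\[
\|f\|_{L^{10}(\Omega)}^{10}=\int_1^\infty|f(r)|^{10}\,r^2\,dr\le\|f\|_{\dot H^1_0}^{10}\int_1^\infty r^{-3}\,dr\lesssim\|f\|_{\dot H^1_0}^{10}\,.
\]
Energy conservation bounds $\|u(t)\|_{\dot H^1_0}\lesssim E^{1/2}$, while Proposition \ref{sobolev} at $p=2$ together with the spectral formula $\F_0(e^{it\Delta_D}f)=e^{-i\l^2t}\F_0 f$ shows $\|u_-(t)\|_{\dot H^1_0}=\|u(t_-)\|_{\dot H^1_0}\lesssim E^{1/2}$. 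Hence $\|u-u_-\|_{L^\infty_tL^{10}_x(I\times\Omega)}\lesssim 1$, and the previous display yields $|I|\gtrsim\eta_0^{10}$. Choosing $\eta_1$ small enough relative to $\eta_0^{10}$ (consistent with the convention $\eta_1\ll\eta_0$, which allows $\eta_1$ to be any sufficiently negative power of $\eta_0$ times a constant depending on $E$) contradicts $|I|\le\eta_1$.

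The main obstacle is precisely the $L^{10}_x$ bound in the last step: in $\R^3$ radial $\dot H^1$ functions only embed into $L^6$, so this Hölder-in-time strategy would fail and one expects no uniform lower bound on $|I_j|$ — which matches the footnote contrasting this situation with the whole-space case. Here the exterior geometry, namely that $\Omega=\{|x|>1\}$ avoids the origin, provides the Strauss-type decay $r^{-1/2}$ that rescues the embedding and lets the argument close.
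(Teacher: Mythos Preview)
Your argument is correct and considerably more direct than the paper's. In fact it can be shortened further: the Duhamel formula you wrote down and the unexceptional hypothesis are never actually used, since applying your radial embedding $\|f\|_{L^{10}_x(\Omega)}\lesssim\|f\|_{\dot H^1_0}$ directly to $u$ already gives
\[
\eta_0<\|u\|_{L^{10}_{t,x}(I\times\Omega)}\le|I|^{1/10}\|u\|_{L^\infty_tL^{10}_x(I\times\Omega)}\lesssim|I|^{1/10}\,,
\]
hence $|I|\gtrsim\eta_0^{10}$ for \emph{every} $I_j$, exceptional or not.

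The paper takes a different route. It introduces $w(t)=i\int_{t_-}^a e^{i(t-s)\Delta_D}F(u(s))\,ds$, shows $\|w\|_{L^{10}_{t,x}}\gtrsim\eta_0$ on half of $I$, and then splits $w$ at frequency $c(\eta_0)|I|^{-1/2}$. The low-frequency piece is controlled by Bernstein and the $L^6$ energy bound; the high-frequency piece by the $L^1\to L^\infty$ dispersive estimate, gaining a factor $|I|^{1/2}$ from the frequency threshold and invoking, in passing, the very same bound $\|u\|_{L^\infty_tL^{10}_x}\lesssim 1$ that you isolate as the decisive input. Your proof extracts this input and closes in two lines; the paper's longer argument is essentially a rehearsal for the mass-concentration lemma that follows, where the frequency splitting and the dispersive smallness of the history term are genuinely needed and a bare H\"older-in-time estimate would not suffice.
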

\begin{proof}
Denote
$$
I=[a,b].
$$
Without loss of generality, we assume\footnote{Otherwise if this holds for
$[a,\frac{a+b}2]$, and we apply a similar argument by just reversing the time direction.}
$$
\|u\|_{L_{t,x}^{10}([\frac {a+b}2,b]\times \Omega)}\ge \frac {\eta_0}2.
$$
By the Duhamel formula
$$
u(t)=u_-(t)-i\int_{t_-}^a e^{i(t-s)\Delta_D}F(u)(s)ds-i\int_a^t e^{i(t-s)\Delta_D} F(u)(s)ds
\,.
$$
We define
\begin{align*}
w(t):&=i\int_{t_-}^a e^{i(t-s)\Delta_D}F(u)(s)ds \\
&=-u(t)+u_-(t)-i\int_a^t e^{i(t-s)\Delta_D}F(u)(s)ds\,.
\end{align*}
We next observe that $w$ has certain bounds. By Strichartz estimates and the steps
leading from \eqref{Strichest} to \eqref{small_strichartz},
$$
\sup_t\|w(t,\cdot\,)\|_{\dot H_0^1(\Omega)}\lsm 1\,, \quad \forall\, t\in I.
$$
Moreoever, we have
\begin{align}\notag
\|u_-\|_{L_{t,x}^{10}([\frac {a+b}2,b]\times \Omega)}&\le \eta_0^{10}\,,\\
\label{intFest}
\left\|\,\int_a^te^{i(t-s)\Delta_D}F(u)(s)ds\,
\right\|_{L_{t,x}^{10}([\frac {a+b}2,b]\times\Omega)}&\lesssim \eta_0^4\le \eta_0^2\,.
\end{align}
By the triangle inequality we then have
\begin{equation}\label{w-ten}
\|w\|_{L_{t,x}^{10}([\frac {a+b}2,b]\times \Omega)}\ge \frac {\eta_0}4.
\end{equation}

\vspace{0.2cm}

We next consider an upper bound on the $L^{10}$ norm of $w$.
For the low frequency component, we use the Bernstein inequality
\begin{align} \label{w-low}
\| P_{<c(\eta_0) |I|^{-\frac 12}} &w \|_{L_{t,x}^{10}([\frac{a+b}2,
b]\times \Omega)} \notag \\
\lesssim & \,|I|^{\frac 1{10}} (c(\eta_0) |I|^{-\frac 12})^{3(\frac 16-\frac 1{10})} \|w\|_{L_t^\infty L_x^6([\frac{a+b}2, b] \times \Omega)}\\
\lesssim & \,c(\eta_0)^{\frac 15}
\le \eta_0^2\,.
\notag
\end{align}

For the high frequency component, we use dispersive estimates to obtain
\begin{align}\notag
\bigl\|P_{>c(\eta_0)|I|^{-\frac 12}}w
 &\bigr\|_{L_{t,x}^{10}([\frac
{a+b}2,b]\times \Omega)}
\notag\\
& =\biggl\|\,\int_{t_-}^a e^{i(t-s)\Delta_D}
P_{>c(\eta_0)|I|^{-\frac 12}} F(u(s,\cdot\,))ds\,
\biggr\|_{L_{t,x}^{10}([\frac {a+b}2,b]\times \Omega)}\\
\notag
&\lesssim |I|^{\frac 1{10}}\int_{t_-}^a |\tfrac{a+b}2-s|^{-\frac 65}
\bigl\|P_{>c(\eta_0)|I|^{-\frac 12}}
F(u(s,\cdot\,))\bigr\|_{L_x^{\frac {10}9}(\Omega)}ds\\
\label{highfreqest}
&\lesssim |I|^{\frac 1{10}}|I|^{-\frac 15} (c(\eta_0)|I|^{-\frac 12})^{-1}
\|\nabla F(u)\|_{L_t^{\infty}L_x^{\frac {10}9}([t_-,a]\times\Omega)}\\
\notag
&\lesssim |I|^{-\frac 1{10}}C(\eta_0)\,|I|^{\frac 12}
\|\nabla u \|_{L_t^{\infty}L_x^2([t_-,t_+]\times \Omega)}\,\|u\|^4_{L_t^{\infty}L_x^{10}([t_-,t_+]\times \Omega)}\,.
\end{align}
Using the radial Sobolev embedding \eqref{radial_emb} and the
interpolation bound
\begin{align*}
\|u\|_{L_t^{\infty}L_x^{10}([t_-,t_+]\times \Omega)}\lesssim
\|u\|_{L_t^{\infty}L_x^6([t_-,t_+]\times\Omega)}^{\frac 35}
\|u\|_{L_t^{\infty}L_x^{\infty}([t_-,t_+]\times \Omega)}^{\frac 25}\lsm 1\,,
\end{align*}
we have the bound
$$
\bigl\|P_{>c(\eta_0)|I|^{-\frac 12}}w\bigr\|_{L_{t,x}^{10}([\frac {a+b}2,b]\times\Omega)}
\lesssim C(\eta_0)|I|^{\frac 25}.
$$
If $|I|\le\eta_1$, by taking $\eta_1$ sufficiently small we have
\begin{equation}\label{w-high}
\bigl\|P_{>c(\eta_0)|I|^{-\frac 12}}w\bigr\|_{L_{t,x}^{10}([\frac {a+b}2,b]\times \Omega)}
\le \frac {\eta_0}{100}\, .
\end{equation}
The estimate of \eqref{w-low}, \eqref{w-high} together imply that
$$
\|w\|_{L_{t,x}^{10}([\frac {a+b}2,b]\times \Omega)}\le \frac{\eta_0}{10}\,,
$$
which contradicts \eqref{w-ten}. We conclude that
$$
|I|>\eta_1\,,
$$
if $I$ is an unexceptional interval.
\end{proof}

Another important property about unexceptional intervals is that the solution will
concentrate on them due to the nontrivial spacetime bound.
We begin with local mass conservation.

Let $\phi$ be the smooth cutoff function defined in \eqref{def_phi}, and
$\phi_R(x)=\phi(x/R)$. Define the
local mass of $u$ to be
$$
M_R(t)=\int_{|x|\ge 1} |u(t,x)|^2 \phi_R^2(x)\,dx\,.
$$
Then we have
\begin{lem}[Local mass conservation]
\begin{align}\label{mass_dev}
\Bigl|\frac d{dt}M_R^{\frac12}(t)\Bigr|\lesssim \frac 1 R\,.
\end{align}
\end{lem}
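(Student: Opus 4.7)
The plan is to differentiate $M_R(t)$ directly, using the equation to convert the time derivative into a spatial divergence, and then exploit the fact that $\phi_R^2$ is essentially constant away from an annular region of width $\sim R$ near $|x|\sim R$.

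First I would compute, formally,
$$
\partial_t |u|^2 = 2\operatorname{Re}(\bar u\partial_t u)
= 2\operatorname{Re}\bigl(i\bar u \Delta u - i|u|^6\bigr)
= -2\operatorname{Im}(\bar u \Delta u)
= -2\operatorname{Im}\nabla\!\cdot\!(\bar u\,\nabla u)\,,
$$
where the nonlinearity contributes nothing because $|u|^6$ is real. Substituting this into $\frac{d}{dt}M_R$ and integrating by parts over $\Omega$ yields
$$
\frac{d}{dt}M_R(t)
= -2\operatorname{Im}\!\int_{\Omega}\phi_R^2\,\nabla\!\cdot\!(\bar u\,\nabla u)\,dx
= 4\operatorname{Im}\!\int_{\Omega}\phi_R\,\nabla\phi_R\!\cdot\!\bar u\,\nabla u\,dx\,,
$$
the boundary term on $\partial\Omega$ (i.e.\ at $|x|=1$) dropping out by the Dirichlet condition $u|_{\partial\Omega}=0$. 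The rigorous justification of these manipulations for $\dot H^1_0$ solutions can be done by approximating $u_0$ in $\dot H_0^1$ by Schwartz data compactly supported in $\Omega$, for which $u$ is smooth and these computations hold classically, then passing to the limit using \eqref{small_strichartz}.

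Next I would estimate the resulting integral. Since $\nabla\phi_R=R^{-1}(\nabla\phi)(x/R)$ is supported in the annulus $R\le|x|\le 2R$ and satisfies $|\nabla\phi_R|\lesssim R^{-1}$, Cauchy--Schwarz gives
$$
\Bigl|\tfrac{d}{dt}M_R(t)\Bigr|
\,\lesssim\, \Bigl(\int \phi_R^2|u|^2\,dx\Bigr)^{\!\frac12}
\Bigl(\int |\nabla\phi_R|^2 |\nabla u|^2\,dx\Bigr)^{\!\frac12}
\,\lesssim\, \frac 1R\,M_R^{\frac12}(t)\,\|\nabla u(t)\|_{L^2(\Omega)}\,.
$$
The factor $\|\nabla u(t)\|_{L^2(\Omega)}$ is controlled by the conserved energy $E(u_0)\le E$, so it is bounded by $\lesssim 1$ in the $\lesssim$ convention of this section.

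Finally, dividing by $2M_R^{1/2}(t)$ yields
$$
\Bigl|\tfrac{d}{dt}M_R^{\frac12}(t)\Bigr|
= \frac{1}{2M_R^{\frac12}(t)}\Bigl|\tfrac{d}{dt}M_R(t)\Bigr|
\,\lesssim\, \frac 1R\,,
$$
which is the desired inequality. The only real subtlety is ensuring that the boundary integral on $\partial\Omega$ genuinely vanishes after integration by parts; this is where the Dirichlet condition is used, and it is the one place the argument differs from the standard whole-space local mass conservation. The apparent singularity when $M_R(t)=0$ is harmless since the bound $|\frac{d}{dt}M_R|\lesssim R^{-1}M_R^{1/2}$ already shows that $M_R^{1/2}$ is Lipschitz in $t$ with constant $\lesssim R^{-1}$.
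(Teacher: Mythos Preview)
Your proof is correct and follows essentially the same route as the paper's: differentiate $M_R$, use the equation to replace $\partial_t|u|^2$ by a divergence (the nonlinearity dropping out as real), integrate by parts to put the derivative on $\phi_R^2$, and apply Cauchy--Schwarz together with the energy bound. Your added remarks on the vanishing boundary term, the approximation argument, and the Lipschitz interpretation when $M_R=0$ are reasonable elaborations but do not constitute a different approach.
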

\begin{proof}Using the equation in \eqref{nls}, we compute
\begin{align*}
\frac d{dt}M_R(t)&=2 \textit{Re}\int_{|x|\ge 1}u_t\,\bar u \,\phi_R^2(x)\,dx
\\
&=-2\,\textit{Im} \int_{|x|\ge 1} \Delta u \cdot \bar u \,\phi_R^2(x) \,dx \\
&=\frac 2R \,\textit{Im}\int_{|x|\ge 1} \nabla u \,\bar u \,\phi_R(x)
(\nabla \phi)_R(x)\,dx\,.
\end{align*}
Therefore
\begin{align*}
\Bigl|\frac d{dt}M_R(t)\Bigr|&\lesssim \frac 2R \,\|\nabla u\|_{L^2(|x|\ge 1)}\,
\|u\,\phi_R\|_{L^2(|x|\ge 1)}\\
&\lesssim \frac 1R M_R^{\frac 12}(t)\,.
\end{align*}
From here, \eqref{mass_dev} follows directly.
\end{proof}

We next establish the important

\begin{lem}[Mass concentration on unexceptional intervals]
Let $I$ be an unexceptional interval. Then for any $t\in I$,
$$
\int_{|x|<\frac 1{\eta_3}|I|^{\frac 12}}|u(t,x)|^2dx\ge c(\eta_2) |I|\,.
$$
\end{lem}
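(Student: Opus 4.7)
My plan is to reduce mass concentration to a pointwise concentration at a single time $t^* \in I$, upgrade this to an $L^2$ mass bound on a ball via frequency localization and kernel analysis, and then propagate to all $t \in I$ using the local mass conservation \eqref{mass_dev} just established.

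Since $\|u\|_{L^{10}_{t,x}(I\times\Omega)}^{10} > \eta_0^{10}$ (from the partition defining the $I_j$), pigeonholing in time produces $t^* \in I$ with $\|u(t^*)\|_{L^{10}_x}^{10} \gtrsim \eta_0^{10}/|I|$. The conserved energy together with Sobolev embedding gives $\|u\|_{L^\infty_t L^6_x(I\times\Omega)} \lesssim 1$, so the pointwise interpolation $\|u(t^*)\|_{L^{10}_x}^{10} \le \|u(t^*)\|_{L^\infty_x}^4 \|u(t^*)\|_{L^6_x}^6$ yields $\|u(t^*)\|_{L^\infty_x} \gtrsim A := \eta_0^{5/2} |I|^{-1/4}$. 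By radial symmetry this supremum is attained at some $r^* \ge 1$, and the radial Sobolev embedding \eqref{radial_emb} constrains $r^* \lesssim \eta_0^{-5} |I|^{1/2}$.

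The central step is to convert the pointwise bound into an $L^2$ mass of order $|I|$ localized near $r^*$. I would introduce a Littlewood--Paley cutoff $P_{\le c(\eta_0) N_*}$ at scale $N_* \sim A^2 \sim \eta_0^{5}|I|^{-1/2}$; the Bernstein inequality \eqref{bernstein1} then gives
\[
\|P_{\le c(\eta_0) N_*} u(t^*)\|_{L^\infty_x} \lesssim (c(\eta_0) N_*)^{1/2}\|u\|_{L^6_x} \lesssim c(\eta_0)^{1/2} A,
\]
which is much smaller than $A/2$ for $c(\eta_0)$ small, so the excess $|P_{>c(\eta_0) N_*} u(t^*, r^*)| \gtrsim A$ must be carried by frequencies near $N_*$. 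Using the explicit kernel representation $K_{N_*}(r,s)=\tfrac{1}{\pi}\bigl(\widehat{\psi}(N_*(r-s))-\widehat{\psi}(N_*(r+s-2))\bigr)/(rs)$ from Section 2.3, which is effectively supported in $|r-s|\lesssim N_*^{-1}$ with $|K_{N_*}|\lesssim N_*/(rs)$ there and rapidly decaying tail, I would split the integral $P_{N_*}u(t^*,r^*)=\int K_{N_*}(r^*,s)u(t^*,s)s^2ds$ into a main part on $B:=\{|s-r^*|\le CN_*^{-1}\}$ (with tail controlled by the $L^6_x$ bound on $u$) and apply Cauchy--Schwarz in $s^2\,ds$ to deduce $A\lesssim (N_*^{1/2}/r^*)\bigl(\int_B|u(t^*)|^2 s^2 ds\bigr)^{1/2}$, giving $\int_B |u(t^*,s)|^2 s^2 ds \gtrsim c(\eta_0)|I|$. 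Since $B\subset\{|x|<\eta_3^{-1}|I|^{1/2}\}$ provided $\eta_3\ll\eta_0^5$, this establishes the mass concentration at time $t^*$.

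To finish, I would propagate the bound to all $t \in I$ using \eqref{mass_dev} with $R=\eta_3^{-1}|I|^{1/2}$: the total variation of $M_R^{1/2}$ over $I$ is $\lesssim |I|/R = \eta_3|I|^{1/2}$, which is much smaller than $M_R^{1/2}(t^*)\gtrsim c(\eta_0)^{1/2}|I|^{1/2}$ when $\eta_3\ll c(\eta_0)^{1/2}$, yielding $M_R(t)\ge c(\eta_2)|I|$ uniformly in $t\in I$. The main obstacle is the $L^2$-mass extraction: the whole-space argument of \cite{borg:scatter, tao:radial} exploits that $P_N$ is convolution with a Schwartz kernel of unit $L^1$ mass, which fails in the Dirichlet setting. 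Here the kernel $K_{N_*}(r,s)$ carries the radial weight $s^2\,ds$, the $(rs)^{-1}$ prefactor, and an additional reflection term $\widehat{\psi}(N_*(r+s-2))$ coming from bouncing off the sphere; handling these---especially when $r^*$ approaches the boundary, where the Dirichlet condition $u(1)=0$ interacts nontrivially with the reflection term---requires the careful analysis of the interaction between spatial and frequency cutoffs highlighted in the introduction.
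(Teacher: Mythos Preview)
Your proposal contains a genuine scaling gap in the central step. Granting for the moment that $|P_{N_*}u(t^*,r^*)|\gtrsim A$, your Cauchy--Schwarz bound $A\lesssim (N_*^{1/2}/r^*)\bigl(\int_B|u|^2\,s^2ds\bigr)^{1/2}$ yields
\[
\int_B|u(t^*)|^2\,s^2ds\;\gtrsim\;\frac{A^2(r^*)^2}{N_*}\;=\;(r^*)^2
\]
since you chose $N_*\sim A^2$. The only lower bound available on $r^*$ is $r^*\ge 1$ (the obstacle radius); radial Sobolev gives an \emph{upper} bound $r^*\lesssim\eta_0^{-5}|I|^{1/2}$ but no lower bound comparable to $|I|^{1/2}$. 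Thus your argument produces mass $\gtrsim 1$, not mass $\gtrsim c(\eta_0)|I|$, and the lemma fails to follow when $|I|$ is large (there is no a~priori upper bound on $|I|$ in terms of the energy). The underlying reason is that in the exterior domain the radial embedding $\|u\|_{L^\infty}\le\|u\|_{\dot H^1_0}$ caps the pointwise size of $u$, so the pigeonholed bound $|u(t^*,r^*)|\gtrsim\eta_0^{5/2}|I|^{-1/4}$ is too weak to force $|I|$-scale mass near a point that may sit at distance $O(1)$ from the boundary. A secondary issue is that the claim ``the excess must be carried by frequencies near $N_*$'' is unjustified: you have ruled out frequencies $\le c(\eta_0)N_*$ via Bernstein, but no argument bounds the contribution from frequencies $\gg N_*$.

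The paper's proof circumvents both problems by never passing through a pointwise concentration. Instead it uses the Duhamel representation $w(t)=i\int_{t_-}^{a}e^{i(t-s)\Delta_D}F(u)\,ds$ together with the dispersive estimate (which gains a factor $|t-s|^{-6/5}$ in $L^{10/9}\to L^{10}$ over the gap $|t-s|\ge|I|/2$) and a spatial splitting of $F(u)$ to show that $\|P_{>C(\eta_2)|I|^{-1/2}}w\|_{L^{10}_{t,x}}$ is negligible. This, combined with the unexceptionality hypothesis, forces $\|P_{<C(\eta_2)|I|^{-1/2}}\,\phi_{<\eta_3^{-1}|I|^{1/2}}u\|_{L^{10}_{t,x}}\gtrsim\eta_0$. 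A single application of Bernstein $L^2\to L^{10}$ then gives mass $\gtrsim c(\eta_2)|I|$ with the correct scaling, because the frequency cutoff $N\sim|I|^{-1/2}$ was supplied by the Duhamel/dispersive analysis rather than by an elliptic argument on $u(t^*)$ alone.
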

\begin{proof}
We assume $I=[a,b]$ and make the same simplifications as in the beginning part of the proof of Lemma
\ref{lem:lowerbound}.
We first prove the $L_{t,x}^{10}$-norm of the high frequency part of the solution is negligible. A calculation similar to \eqref{highfreqest} yields
\begin{equation*}
\bigl\|P_{>C(\eta_2)|I|^{-\frac 12}}w\bigr\|_{L_{t,x}^{10}([\frac {a+b}2,b]\times \Omega)}
\lesssim |I|^{-\frac 1{10}} \bigl\|P_{>C(\eta_2)|I|^{-\frac 12}}F(u)
\bigr\|_{L_t^{\infty}L_x^{\frac{10}9}([t_-,t_+]\times \Omega)}\,.
\end{equation*}
First consider the case that $\eta_2|I|^{\frac 12}\ge 2$.
We estimate the norm of $F(u)$ in different spatial regimes.
First,
\begin{align*}
\bigl\|P_{>C(\eta_2)|I|^{-\frac 12}} &\phi_{<\eta_2|I|^{\frac 12}}F(u)
\bigr\|_{L_x^{\frac {10}9}(\Omega)} \\
&\lesssim \|\phi_{<\eta_2  |I|^{\frac 12}}F(u)\|_{L_x^{\frac {10}9}(\Omega)} \\
&\lesssim \|F(u)\|_{L_x^{\frac 65}(\Omega)}\|\phi_{<\eta_2 |I|^{\frac 12}}\|_{L_x^{15}(\Omega)} \\
&\lesssim \eta_2^{\frac 15}|I|^{\frac 1{10}} \|u\|_{L_x^6(\Omega)}^5\\
&\lesssim \eta_2^{\frac 15}|I|^{\frac 1{10}}\,.
\end{align*}
Next, using the radial Sobolev embedding
\begin{align*}
\||x|^{\frac 15}u\|_{L_x^{10}}\lsm \|\nabla u\|_{L_x^2}
\end{align*}
together with the
Bernstein inequality and Proposition \ref{sobolev}, we estimate
\begin{align*}
\bigl\|&P_{>C(\eta_2)|I|^{-\frac 12}}\bigl(\phi_{>\eta_2 |I|^{\frac 12}}F(u)\bigr)
\bigr\|_{L_x^{\frac {10}9}(\Omega)} \\
&\lesssim c(\eta_2)|I|^{\frac 12} \bigl\|\nabla \bigl(\phi_{>\eta_2|I|^{\frac 12}}F(u)\bigr)
\bigr\|_{L_x^{\frac {10}9}(\Omega)}\\
&\lesssim c(\eta_2)|I|^{\frac 12} \biggl(\|\nabla \phi_{>\eta_2 |I|^{\frac 12}}
\|_{L_x^{15}(\Omega)}\|u\|_{L_x^6(\Omega)}^5
+\|\phi_{>\eta_2|I|^{\frac 12}}u\|_{L_x^{10}(\Omega)}^4\|\nabla u\|_{L_x^2(\Omega)}\biggr)\\
&\lesssim c(\eta_2) |I|^{\frac 12}\bigl(\eta_2|I|^{\frac 12}\bigr)^{-\frac 45}
\bigl(\,\|u\|_{L_x^6(\Omega)}+\|\nabla u\|_{L_x^2(\Omega)}\bigr)^5 \\
&\lesssim c(\eta_2)\,\eta_2^{-\frac 45}|I|^{\frac 1{10}}
\le \eta_2^2 \,|I|^{\frac 1{10}}\,.
\end{align*}
Therefore
$$
\|P_{>C(\eta_2)|I|^{-\frac 12}}w\|_{L_{t,x}^{10}([\frac {a+b}2,b]\times\Omega)}\le \frac {\eta_0}{100}\,.
$$
In case $\eta_2|I|^{\frac 12}\le 2$, applying a similar argument without the cutoff $\phi$
yields the better bound $c(\eta_2)|I|^{\frac 12}$.

This combines with \eqref{w-ten}  give
\begin{equation}\label{medium-w}
\|P_{<C(\eta_2)|I|^{-\frac 12}}w\|_{L_{t,x}^{10}([\frac {a+b}2,b]\times \Omega)} \ge \frac {\eta_0}8\,.
\end{equation}
Recalling the definition of $w$, the boundedness of $P$, \eqref{intFest}
and \eqref{w-ten}, and the condition for unexceptional intervals, we have
\begin{equation}\label{medium-u}
\|P_{<C(\eta_2)|I|^{-\frac 12}}u\|_{L_{t,x}^{10}([\frac {a+b}2,b]\times\Omega)}\ge \frac {\eta_0}{10}\,.
\end{equation}
On the other hand, interpolation and the lower bound for $|I|$ yield
\begin{align*}
\|\phi_{>\frac 1{\eta_3} |I|^{\frac 12}}u&\|_{L_{t,x}^{10}([\frac {a+b}2,b]\times \Omega)} \\
&\lesssim |I|^{\frac 1{10}} \|\phi_{>\frac 1{\eta_3}|I|^{\frac 12}}u\|_{L_t^{\infty}L_x^{10}([\frac {a+b}2,b]\times \Omega)} \\
&\lesssim |I|^{\frac 1{10}} \|\phi_{>\frac 1{\eta_3} |I|^{\frac 12}}u\|_{L_t^{\infty}L_x^6([\frac {a+b}2,b]\times \Omega)}^{\frac 35} \|\phi_{>\frac 1{\eta_3}
|I|^{\frac 12}}u\|_{L_t^{\infty}L_x^{\infty}([\frac {a+b}2,b]\times \Omega)}^{\frac 25}
\\
&\lesssim |I|^{\frac 1{10}} \Bigl(\frac 1{\eta_3}|I|^{\frac 12}\Bigr)^{-\frac 15}\\
&\le \eta_0^2\,.
\end{align*}
Thus, \eqref{medium-u} can be improved to
$$
\bigl\|P_{<C(\eta_2)|I|^{-\frac 12}}\phi_{<\frac 1{\eta_3}|I|^{\frac 12}}u
\bigr\|_{L_{t,x}^{10}([\frac{a+b}2, b]\times\Omega)}\ge \frac {\eta_0}{20}\,.
$$
From this, the mass concentration follows quickly. Indeed, using the Bernstein and H\"older
inequalities yields
\begin{align*}
 \frac {\eta_0}{20}&\le
 \bigl\|P_{<C(\eta_2)|I|^{-\frac 12}}\phi_{<\frac 1{\eta_3}|I|^{\frac 12}}u
 \bigr\|_{L_{t,x}^{10}([\frac {a+b}2,b]\times \Omega)} \\
&\lsm |I|^{\frac 1{10}}\bigl(C(\eta_2)|I|^{-\frac 12}\bigr)^{3(\frac 12-\frac 1{10})}
\|\phi_{<\frac 1{\eta_3}
|I|^{\frac 12}} u\|_{L_t^{\infty}L_x^2([\frac {a+b}2,b]\times \Omega)}\\
&\lesssim C(\eta_2)|I|^{-\frac 12} \|\phi_{<\frac 1{\eta_3}|I|^{\frac 12}}u
\|_{L_t^{\infty}L_x^2([\frac {a+b}2,b]\times\Omega)}\,.
\end{align*}
Thus, there exists $t_0\in I$ such that
$$
\|u(t_0)\|_{L^2(1\le |x|<\frac 1{\eta_3}|I|^{\frac 12})}>c(\eta_2)|I|^{\frac 12}\,.
$$
Using \eqref{mass_dev} we get
$$
\|u(t)\|_{L^2(1\le |x|<\frac 1{\eta_3}|I|^{\frac 12})}>c(\eta_2)|I|^{\frac 12}\,,
\quad \forall\  t\in I\,.
$$
\end{proof}

\begin{prop}[Morawetz inequality] \label{prop37}
Let $I$ be a time interval, and let $A\ge 1$. Then
$$
\int_I \int_{1\le |x|\le A|I|^{\frac 12}} \frac {|u(t,x)|^6}{|x|} \,dx \,dt
\lsm A\,|I|^{\frac 12}\,.
$$
\end{prop}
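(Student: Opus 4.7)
The plan is to establish Proposition \ref{prop37} by a truncated Lin--Strauss Morawetz identity. Since $u$ is radial, the substitution $v(t,r) := r\,u(t,r)$ converts \eqref{nls} into a 1D half-line problem
$$iv_t + v_{rr} = \frac{|v|^4 v}{r^4}\,,\qquad r\ge 1\,,\quad v(t,1)=0\,.$$
Under this reduction, $\int_\Omega |u|^6/|x|\,dx = 4\pi\int_1^\infty |v|^6/r^5\,dr$; the energy bound translates to $\|v_r\|_{L^2([1,\infty))}\lesssim 1$; and the radial Sobolev embedding \eqref{radial_emb} yields $|v(r)|^2\lesssim r$, hence $|v(r)|^6\lesssim r^3$.

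Set $R := A|I|^{1/2}$, and fix $\psi\in C_c^\infty([0,\infty))$ with $\psi\equiv 1$ on $[0,1]$ and $\psi\equiv 0$ on $[2,\infty)$. Let $a(r):=\psi(r/R)$, so $a\equiv 1$ on $[1,R]$, $\mathrm{supp}\,a\subset [0,2R]$, and $|a^{(k)}|\lesssim R^{-k}$ with $a^{(k)}$ supported in $[R,2R]$ for $k\ge 1$. Define the Morawetz functional
$$Q(t) := 2\,\mathrm{Im}\int_1^\infty a(r)\,v_r(t,r)\,\overline{v(t,r)}\,dr\,.$$
A direct computation from the $v$-equation, together with integration by parts using $v(t,1)=0$ and the compact support of $a$, produces the identity
\begin{align*}
Q'(t) = 2a(1)|v_r(t,1)|^2 - \int a'''|v|^2\,dr + 4\int a'|v_r|^2\,dr + \frac{4}{3}\int \frac{a'}{r^4}|v|^6\,dr + \frac{8}{3}\int \frac{a}{r^5}|v|^6\,dr.
\end{align*}

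Since $a\equiv 1$ on $[1,R]$, the last term restricted to that interval equals $\tfrac{8}{3}\int_1^R |v|^6/r^5\,dr = \tfrac{2}{3\pi}\int_{1\le|x|\le R}|u|^6/|x|\,dx$, which is the desired main quantity; the boundary contribution $2|v_r(t,1)|^2\ge 0$ (the star-shapedness of $\partial B(0,1)$ in disguise) is discarded. The remaining pieces are all supported in the cutoff annulus $[R,2R]$ and each is $O(R^{-1})$ uniformly in $t$: using $\|v_r\|_{L^2}\lesssim 1$, $|v|^2\lesssim r$, and $|v|^6\lesssim r^3$, one obtains $\bigl|\int a'''|v|^2 dr\bigr|\lesssim R^{-3}\cdot R^2$, $\bigl|\int a'|v_r|^2 dr\bigr|\lesssim R^{-1}$, and $\bigl|\int a'|v|^6/r^4 dr\bigr|\lesssim R^{-1}\int_R^{2R} r^{-1} dr \lesssim R^{-1}$.

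Integrating the identity over $I$ thus yields
$$\int_I\int_{1\le|x|\le R}\frac{|u|^6}{|x|}\,dx\,dt \lesssim \sup_{t\in I}|Q(t)| + \frac{|I|}{R}.$$
Cauchy--Schwarz, combined with $\|v\|_{L^2([1,2R])}^2\le\int_1^{2R} r\,dr\lesssim R^2$, yields $|Q(t)|\lesssim R = A|I|^{1/2}$; since $A\ge 1$ the right-hand side is $\lesssim A|I|^{1/2}+|I|^{1/2}/A \lesssim A|I|^{1/2}$, proving the claim. The main hurdle is the careful bookkeeping of signs in the derivation of $Q'(t)$, along with the verification that all cutoff errors close using the weighted radial Sobolev bounds---global $L^2$ control of $u$ is unavailable here, which is why the naive Lin--Strauss argument on the whole space does not transfer directly.
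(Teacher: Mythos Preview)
Your proof is correct and follows essentially the same truncated Lin--Strauss Morawetz strategy as the paper. The only difference is cosmetic: you carry out the virial computation after the radial reduction $v=ru$ with weight $a(r)=\psi(r/R)$, whereas the paper works directly in $\R^3$ with the multiplier $a(x)=|x|\,\phi_{<R}(x)$ and the momentum identity \eqref{moment}; since your 1D weight integrates to the same $|x|$ multiplier on $|x|\le R$, the resulting main term, boundary term, and $O(R^{-1})$ cutoff errors coincide, and both arguments close via the radial Sobolev bound \eqref{radial_emb} in lieu of unavailable global $L^2$ control.
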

\begin{proof}
We begin with the local momentum conservation identity
\begin{equation}\label{moment}
\partial_t \textit{Im}(\partial_k u \,\bar u)=
-2\partial_j \textit{Re}(\partial_k u\,\partial_j \bar u)+\frac 12 \partial_k \Delta (|u|^2)
-\frac 23 \partial_k|u|^6\,.
\end{equation}

Let $a(x)= |x| \phi_{<R}(x)\,,$ so that $a$ is a radial function.  Let
$a_{jk}=\partial_j\partial_k a$.
Observe that for $1\le |x|\le R\,,$
$$
a_{jk}(x) \;\text{is positive definite}\,,\quad\nabla a(x)=\frac x{|x|}\,,
\quad
\Delta^2 a(x)<0\,.
$$
In the region $|x|\ge R\,,$ $a(x)$ has the rough bound
$$
|\partial_k a|\lesssim 1\,, \quad |a_{jk}|\lesssim \frac 1R\,,\quad
|\Delta^2 a|\lsm \frac 1{R^3}\,.
$$

We multiply the first term in \eqref{moment} by $\partial_k a$ and integrate over
$\Omega$ to obtain
\begin{align*}
-2\int_{|x|\ge 1}
\partial_j \textit{Re}(\partial_k u\,&\partial_j \bar u)\,\partial_k a \,dx \\
&=2\int_{|x|\ge 1}\textit{Re}(\partial_k u\,\partial_j \bar u)\,a_{jk}\, dx
+2\int_{|x|=1}\textit{Re}(\partial_k u\,\partial_j \bar u)\,x_j\,x_k \,d\sigma\\
&=2\int_{1\le |x|\le R} \textit{Re}(\partial_k u\,\partial_j \bar u) \,a_{jk}\, dx
+2\int_{|x|\ge R}\textit{Re}(\partial_k u\,\partial_j \bar u)\, a_{jk}\, dx\\
&\qquad\qquad +2\int_{|x|=1}|\nabla u|^2 \,d\sigma(x) \\
&\ge -\frac 1R \|\nabla u\|_{L^2(\Omega)}^2+2\int_{|x|=1}|\nabla u|^2
d\sigma(x)\,.
\end{align*}
We do the same for the second term in \eqref{moment}, and use the Dirichlet condition
to calculate
\begin{align*}
\frac 12 \int_{|x|\ge 1}\partial_k &\Delta (|u|^2)\,\partial_k a\,
dx\\
&=-\frac 12 \int_{|x|\ge 1} \Delta(|u|^2) \,\Delta a \, dx
-\frac 12 \int_{|x|=1}\Delta (|u|^2)\, d\sigma \\
&=-\frac 12 \int_{|x|\ge 1}|u|^2 \Delta^2 a \, dx
-\int_{|x|=1}|\nabla u|^2 \, d\sigma(x)\\
&=-\frac 12 \int_{1\le |x|\le R}|u|^2\Delta^2 a \, dx
-\frac 12\int_{|x|\ge R}|u|^2 \Delta^2 a \, dx-\int_{|x|=1}|\nabla u|^2 \,d\sigma(x)\\
&\ge -\frac CR \|u\|^2_6 -\int_{|x|=1}|\nabla u|^2 \,d\sigma(x)\,.
\end{align*}
Similarly for the third term in \eqref{moment},
\begin{align*}
-\frac 23\int_{|x|\ge1}\partial_k(|u|^6)\,\partial_k a \, dx
&=\frac 23
\int_{|x|\ge 1} |u|^6 \Delta a \, dx\\
&=\frac 43\int_{1\le |x|\le R} \frac {|u|^6}{|x|} \,dx +\frac 23
\int_{|x|\ge R} |u|^6 \Delta a \, dx\\
&\ge \frac 43 \int_{1\le |x|\le R}\frac {|u|^6}{|x|} \,dx-\frac 1R
\|u\|_6^6\,.
\end{align*}
Notice also that
$$
\Bigl|\int_{|x|\ge 1} \textit{Im}(\partial_k u\,\bar u)\,\partial_k a\, dx\Bigr|\lesssim
\|\partial_k u\|_{L^2(\Omega)} \|u\|_{L^6(\Omega)} \|\partial_k
a\|_{L^3(\Omega)}\lesssim R\,.
$$
Integrating \eqref{moment} over $I\times \Omega$ we get
$$
\int_I\int_{1\le |x|\le R}\frac {|u|^6}{|x|} \,dx \, dt\lesssim
\frac{|I|}R+R\,.
$$
Taking $R=A\,|I|^{\frac 12}$, since $A\ge 1$ we have
$$
\int_I\int_{1\le |x|\le A|I|^{\frac 12}}\frac {|u|^6}{|x|} \,dx \,dt\lsm
A\,|I|^{\frac 12}\,.
$$
\end{proof}

\begin{lem}
Let $J$ be an interval that contains a contiguous collection
$\bigcup_j I_j$ of unexceptional intervals. Then we have
\begin{equation}\label{ij}
\sum |I_j|^{\frac 12} \le C(\eta_2,\eta_3)|J|^{\frac 12}\,.
\end{equation}
\end{lem}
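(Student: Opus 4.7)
The plan is to combine the mass concentration property with the Morawetz inequality, following the standard Bourgain--Tao-style combinatorial pigeonholing adapted to the exterior domain setting.

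First, I would extract a pointwise-in-time lower bound on $\int|u|^6$ from the mass concentration estimate via H\"older. Specifically, on each unexceptional $I_j$ and for every $t\in I_j$,
\begin{equation*}
c(\eta_2)|I_j|\le \int_{|x|<\eta_3^{-1}|I_j|^{1/2}}|u(t,x)|^2\,dx
\le \Bigl(\int_{|x|<\eta_3^{-1}|I_j|^{1/2}}|u(t,x)|^6\,dx\Bigr)^{1/3}
\bigl(\eta_3^{-1}|I_j|^{1/2}\bigr)^{2},
\end{equation*}
where the volume factor $\sim R^3$ gives $R^2$ after the $2/3$ power. This rearranges to
\begin{equation*}
\int_{|x|<\eta_3^{-1}|I_j|^{1/2}}|u(t,x)|^6\,dx\ge c(\eta_2,\eta_3)\,,\qquad t\in I_j.
\end{equation*}

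Next, since $|x|\le \eta_3^{-1}|I_j|^{1/2}$ on this region, $|x|^{-1}\ge \eta_3\,|I_j|^{-1/2}$, and integrating over $I_j$ yields
\begin{equation*}
\int_{I_j}\int_{1\le |x|<\eta_3^{-1}|I_j|^{1/2}}\frac{|u|^6}{|x|}\,dx\,dt
\ge c(\eta_2,\eta_3)\,|I_j|^{1/2}.
\end{equation*}
Since $|I_j|\le |J|$, the spatial region here is contained in $\{1\le|x|\le \eta_3^{-1}|J|^{1/2}\}$, so summing over the disjoint $I_j\subset J$ gives
\begin{equation*}
\sum_j c(\eta_2,\eta_3)\,|I_j|^{1/2}\le \int_J\!\int_{1\le |x|\le \eta_3^{-1}|J|^{1/2}}\frac{|u|^6}{|x|}\,dx\,dt.
\end{equation*}
Applying Proposition \ref{prop37} with $A=\eta_3^{-1}$ bounds the right-hand side by $C\eta_3^{-1}|J|^{1/2}$, and dividing through by $c(\eta_2,\eta_3)$ yields \eqref{ij}.

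The estimate is thus essentially a one-line consequence of the two main technical inputs already in hand, so I do not expect any obstacle beyond bookkeeping of the $\eta$-constants. The only mild subtlety is ensuring the spatial cutoff radius $\eta_3^{-1}|I_j|^{1/2}$ is compatible with the radius $\eta_3^{-1}|J|^{1/2}$ appearing in the final Morawetz application; this is automatic since $I_j\subset J$.
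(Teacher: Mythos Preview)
Your proof is correct and follows essentially the same route as the paper. The only cosmetic difference is that the paper applies H\"older once with the weight $|x|^{1/3}$ built in, obtaining directly $\int |u|^2\lesssim\bigl(\int |u|^6/|x|\bigr)^{1/3}R^{7/3}$, whereas you first bound $\int|u|^6$ and then insert the weight $|x|^{-1}\ge \eta_3|I_j|^{-1/2}$; both yield the same lower bound $\int|u|^6/|x|\gtrsim c(\eta_2,\eta_3)|I_j|^{-1/2}$ and the remaining steps are identical.
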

\begin{proof}
We apply mass concentration on each of the time intervals $I_j$ to get
$$
c(\eta_2)|I_j|\le \int_{1\le |x|\le \frac 1{\eta_3}|I_j|^{\frac 12}}
|u(t,x)|^2\, dx \lesssim
\biggl(\int_{1\le |x|\le\frac 1{\eta_3}|I_j|^{\frac 12}}
\frac {|u(t,x)|^6}{|x|}\, dx\biggr)^{\frac 13}
\Bigl(\frac 1{\eta_3}|I_j|^{\frac 12}\Bigr)^{\frac 73}\,.
$$
Therefore
$$
c(\eta_2,\eta_3)|I_j|^{-\frac 12}\lesssim \int_{1\le |x|\le \frac
1{\eta_3}|I_j|^{\frac 12}}\frac {|u(t,x)|^6}{|x|}\, dx\,.
$$
We integrate in time over $I_j$ and sum over $j$. The Morawetz inequality then gives
$$
c(\eta_2,\eta_3)\sum |I_j|^{\frac 12}\lesssim \frac
1{\eta_3}|J|^{\frac 12}\,,
$$
which implies \eqref{ij}\,.
\end{proof}

At this point, we can repeat an argument of Bourgain \cite{borg:scatter},
to get the upper bound of the number of unexceptional intervals. We
record the result without repeating the proof.
\begin{thm}
There exists $C(E,\eta_0,\eta_1,\eta_2,\eta_3)$ such that
\begin{align*}
\#\{I_j, I_j \mbox{ is unexceptional}\}\le C(E, \eta_0, \eta_1, \eta_2, \eta_3)\,.
\end{align*}
\end{thm}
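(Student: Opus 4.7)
The plan is to implement the combinatorial strategy of Bourgain \cite{borg:scatter} (and Tao's refinement in \cite{tao:radial}) using the three ingredients that have now been assembled: (i) the number of exceptional intervals is at most $C(E,\eta_0)$; (ii) each unexceptional interval satisfies $|I_j|>\eta_1$; and (iii) over any sub-interval $J$ containing a contiguous run of unexceptional intervals, the Morawetz-derived bound $\sum_{I_j\subset J}|I_j|^{1/2}\le C(\eta_2,\eta_3)|J|^{1/2}$ holds. Since ingredient (i) splits $[t_-,t_+]$ into at most $C(E,\eta_0)+1$ maximal contiguous blocks of unexceptional intervals, it suffices to bound the cardinality inside one such block.

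Fix a contiguous block $J$ of $m$ unexceptional intervals. First, I would partition them dyadically by length: $\mathcal F_k=\{I_j\subset J:\,2^k\eta_1\le |I_j|<2^{k+1}\eta_1\}$. From the mass concentration lemma, every $I_j\in\mathcal F_k$ supports an $L^2$-bubble of mass at least $c(\eta_2)|I_j|^{1/2}$ living in the ball of radius $\eta_3^{-1}|I_j|^{1/2}$. For the intervals in a single dyadic class $\mathcal F_k$ these bubbles live at a common spatial scale and are separated in time; a pigeonhole on the spatial scale combined with the radial Sobolev bound \eqref{radial_emb} and the energy upper bound $E$ will control $\#\mathcal F_k$ at that fixed scale by $C(E,\eta_2,\eta_3)$.

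Next, to sum the dyadic estimates into a bound independent of $|J|$, I would invoke Bourgain's induction-on-energy device. Specifically, applying (iii) to $J$ and combining with the lower bound $|I_j|>\eta_1$ gives $m\,\eta_1^{1/2}\le C(\eta_2,\eta_3)|J|^{1/2}$, which on its own only controls $m$ in terms of $|J|$; the extra input needed is that the \textit{energy} $E(u)$ enforces a rigidity at the scale of the bubbles. If $m$ were too large, many bubbles at comparable spatial scales would have to coexist; their near-orthogonality in $\dot H^1_0$ (again controlled by the radial embedding and dyadic scale separation) would force the total energy to exceed $E$. This dichotomy allows one to extract a positive fraction of the energy that is confined to a sub-region of strictly smaller characteristic scale, reducing the problem to the same statement with strictly smaller effective energy, and iterating.

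The main obstacle is exactly this last step: converting the scale-dependent estimate $m\lesssim |J|^{1/2}\eta_1^{-1/2}$ into an absolute bound that is independent of the a priori unbounded length of $J$. This is carried out in \cite{borg:scatter} and \cite{tao:radial} by the induction-on-energy scheme sketched above (in the radial setting it can be viewed as a bubble-decomposition + pigeonhole argument), and we refer the reader to those papers for the detailed bookkeeping. The output is an estimate of the form $\#\{I_j\text{ unexceptional}\}\le C(E,\eta_0,\eta_1,\eta_2,\eta_3)$, which together with the already-controlled number of exceptional intervals and the uniform bound $\|u\|_{L^{10}_{t,x}(I_j)}\le 2\eta_0$ on each $I_j$ finishes the proof of Theorem \ref{thm:spacetime}.
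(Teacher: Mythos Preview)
The paper's own ``proof'' of this theorem is a one-line deferral to Bourgain's argument in \cite{borg:scatter}, which is precisely what you do in your last paragraph; at that level your proposal and the paper coincide.

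The sketch you insert before the deferral, however, misidentifies the mechanism and contains a step that fails. You assert that at a fixed dyadic length scale a ``pigeonhole on the spatial scale combined with the radial Sobolev bound \eqref{radial_emb} and the energy upper bound $E$'' already controls $\#\mathcal F_k$ by a constant $C(E,\eta_2,\eta_3)$. It does not: the mass bubbles produced by the concentration lemma on the intervals of $\mathcal F_k$ live at \emph{distinct times}, and neither a pointwise-in-time energy bound nor any ``near-orthogonality in $\dot H^1_0$'' (the bubbles are values of the \emph{same} function $u$ at different $t$) constrains how many such times there are. Concretely, a sequence of interval lengths $1,4,1,4^2,1,4^3,\ldots$ satisfies $\sum_{I_j\subset J'}|I_j|^{1/2}\le 2\,|J'|^{1/2}$ for every contiguous sub-union $J'$, yet contains arbitrarily many unit-length intervals; nothing in the ingredients you list rules out this pattern for an actual solution. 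The genuine Bourgain device does not dyadically decompose in $|I_j|$. One instead uses \eqref{ij} (via Cauchy--Schwarz) to locate a single $I_*\subset J$ with $|I_*|\gtrsim_{\eta_2,\eta_3}|J|$, applies the mass-concentration lemma on $I_*$ to extract a bubble carrying a fixed quantum $\delta>0$ of energy, removes that bubble by a perturbative comparison, and invokes the inductive hypothesis at energy level $E-\delta$. Since you do ultimately hand the bookkeeping to \cite{borg:scatter} and \cite{tao:radial}, your conclusion survives; only the description of the route should be corrected.
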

This, combined with the fact that the number of exceptional intervals is finite, proves Theorem \ref{thm:main}.

\section{The energy supercritical problem}

For spherically symmetric function $f\in \dot H_0^1(\Omega)$, the bound
\begin{align*}
\|f\|_{L_x^{\infty}(\Omega)}\le \||x|^{\frac 12}f\|_{L^\infty_x(\Omega)}
\lesssim \|f\|_{\dot H_0^1(\Omega)}\,,
\end{align*}
means that
any supercritical nonlinearity $|u|^p u$ for any $p>4$ can still be
viewed as ``critical". As a consequence, the proof of Theorem
\ref{thm:main} can be applied to the supercritical case after some
minor modifications. More specifically, we  consider the following
energy supercritical NLS in $\Omega$ with the Dirichlet boundary
condition:
\begin{align}\label{nls_super}
\begin{cases}
i\partial_t u+\Delta u=|u|^p u\,, \ p>4\,,\\
u(t,x)|_{\mathbb R\times\partial \Omega}=0\,,\\
u(0,x)=u_0(x).
\end{cases}
\end{align}
We then have the following result.

\begin{thm} Let $u_0\in \dot H_0^1(\Omega)$ be spherically symmetric. Then there exists
a unique solution $u\in C(\mathbb R; \dot H_0^1)$ to
\eqref{nls_super}, and for this solution it holds that
\begin{align*}
&\mbox{Energy  Conservation}: \\
&\qquad E(u(t))=\frac 12 \|\nabla u(t)\|_{L_x^2(\Omega)}^2+\frac 1{p+2}\|u(t)\|_{L_x^{p+2}(\Omega)}^{p+2}=E(u_0)\,,\\
&\mbox{Global spacetime Bound}: \rule{0pt}{12pt}\\
&\qquad \rule{0pt}{12pt} \|u\|_{L_{t,x}^{10}(\mathbb R\times \Omega)}\le
C\bigl(\|u_0\|_{\dot H_0^1(\Omega)}\bigr)\,.
\end{align*}
Moreover, there exist unique $v_{\pm}\in \dot H_0^1(\Omega)$ such that
\begin{align*}
\lim_{t\to \pm\infty}\|u(t)-e^{it\Delta_D}v_{\pm}\|_{\dot H_0^1(\Omega)}=0\,.
\end{align*}

\end{thm}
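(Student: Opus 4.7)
The plan is to repeat the proof of Theorem \ref{thm:main} essentially verbatim, with the nonlinearity $F(u)=|u|^p u$ in place of $|u|^4 u$ throughout. The key new ingredient, as noted just before the statement, is the radial Sobolev embedding $\|f\|_{L_x^\infty(\Omega)}\lesssim \|f\|_{\dot H_0^1(\Omega)}$ from \eqref{radial_emb}. Combined with the conserved energy $E(u)=\tfrac12\|\nabla u\|_{L^2}^2+\tfrac{1}{p+2}\|u\|_{L^{p+2}}^{p+2}$, this yields a uniform bound $\|u\|_{L_t^\infty L_x^\infty}\lesssim 1$, so any excess factor $|u|^{p-4}$ arising from the supercritical power can be absorbed into an $L_{t,x}^\infty$ norm. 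In effect, every nonlinear estimate reduces to its counterpart at the critical exponent.

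Local well-posedness in $C^0_t\dot H_0^1$ follows from a contraction argument via the algebra-type bound
$$
\bigl\| |u|^p u\bigr\|_{\dot H_0^1}\lesssim \|u\|_{L_x^\infty}^p\,\|\nabla u\|_{L_x^2}\lesssim \|u\|_{\dot H_0^1}^{p+1}\,,
$$
yielding a lifespan depending only on $\|u_0\|_{\dot H_0^1}$; energy conservation then extends this to a global solution. For the a priori $L_{t,x}^{10}$ bound I would decompose $[t_-,t_+]=\bigcup_j I_j$ with $\|u\|_{L_{t,x}^{10}(I_j)}\sim \eta_0$ exactly as in Section 3. Writing $|u|^p=|u|^4\cdot|u|^{p-4}$ and placing the second factor in $L_{t,x}^\infty$ gives
$$
\|\nabla F(u)\|_{L_t^{\frac{40}{21}}L_x^{\frac{60}{49}}(I_j\times\Omega)}
\lesssim \|u\|_{L_{t,x}^{10}(I_j\times\Omega)}^4\,\|u\|_{L_{t,x}^\infty}^{p-4}\,\|\nabla u\|_{L_t^8 L_x^{\frac{12}{5}}(I_j\times\Omega)}\,,
$$
so the chain of inequalities from \eqref{Strichest} through \eqref{small_strichartz} still produces $\|\nabla u\|_{L_t^q L_x^r(I_j\times\Omega)}\lesssim 1$ for admissible $(q,r)$ with $r<3$.

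The length lower bound in Lemma \ref{lem:lowerbound} and the mass concentration lemma then carry over with their original proofs: the only change is that intermediate estimates such as $\|\phi F(u)\|_{L_x^{\frac{10}{9}}}$ and $\|\nabla(\phi F(u))\|_{L_x^{\frac{10}{9}}}$ acquire an extra factor $\|u\|_{L_x^\infty}^{p-4}$, which is harmless by radial Sobolev. For the Morawetz inequality in Proposition \ref{prop37}, the local momentum identity for the $|u|^p u$ nonlinearity retains a positive coefficient in front of $\partial_k|u|^{p+2}$, so the same choice $a=|x|\phi_{<R}$ yields
$$
\int_I\!\int_{1\le |x|\le A|I|^{\frac 12}}\frac{|u(t,x)|^{p+2}}{|x|}\,dx\,dt\lesssim A\,|I|^{\frac 12}\,.
$$
Combining this with the mass concentration bound via H\"older in the weight $|x|^{-1}$ replaces \eqref{ij} by the relation
$$
\sum_j |I_j|^{\frac{6-p}{4}}\le C(\eta_2,\eta_3)\,|J|^{\frac 12}\,,
$$
which reduces to \eqref{ij} when $p=4$.

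The main obstacle lies in adapting Bourgain's combinatorial argument to this modified summation inequality. For $4<p<6$ the exponent $(6-p)/4$ remains a positive power of $|I_j|$, and the iteration carries through after routine adjustment of constants. For $p\ge 6$ the exponent is non-positive, and one must couple the above bound with the uniform lower bound $|I_j|\ge \eta_1$ from Lemma \ref{lem:lowerbound}, iterating the mass-concentration/Morawetz trade-off on suitably chosen subintervals as in \cite{borg:scatter,tao:radial}, to control the total count of unexceptional intervals by a constant depending only on $E,\eta_0,\eta_1,\eta_2,\eta_3$. Combining this with the $C(E,\eta_0)$ bound on exceptional intervals produces $\|u\|_{L_{t,x}^{10}(\R\times\Omega)}\lesssim 1$, and scattering then follows by the standard argument sketched in Theorem \ref{thm:local}.
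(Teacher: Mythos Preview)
Your overall strategy matches the paper's: the paper simply remarks that radial Sobolev gives $\|u\|_{L^\infty}\lesssim\|u\|_{\dot H^1_0}$, so the supercritical nonlinearity can be ``viewed as critical'' and the proof of Theorem \ref{thm:main} goes through with minor modifications. Your treatment of the local theory, the Strichartz bootstrap \eqref{Strichest}--\eqref{small_strichartz}, Lemma \ref{lem:lowerbound}, the mass concentration lemma, and the Morawetz inequality is correct and indeed amounts to inserting harmless factors of $\|u\|_{L^\infty}^{p-4}$.

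The gap is in the final combinatorial step. Your derived relation
\[
\sum_j |I_j|^{(6-p)/4}\le C(\eta_2,\eta_3)\,|J|^{1/2}
\]
is correct, but your claim that for $4<p<6$ Bourgain's tower argument ``carries through after routine adjustment of constants'' is not justified, and in fact the relation by itself (even combined with $|I_j|\ge\eta_1$) does \emph{not} bound the number of intervals. Concretely: take $N$ intervals all of the same length $L\ge\eta_1$; then $|J|=NL$ and the relation reads $N L^{(6-p)/4}\le C(NL)^{1/2}$, i.e.\ $N\le C^2 L^{(p-4)/2}$, which is unbounded as $L\to\infty$. The same family shows that the sub-collection form of the inequality is also satisfied, so the iterative ``remove the largest interval'' step no longer yields geometric decay of $|J|$: one only gets $\max_j|I_j|\gtrsim |J|^{2/(p-2)}$, which for $p>4$ is strictly weaker than $\max_j|I_j|\gtrsim|J|$. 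Your proposed fix for $p\ge 6$ (``couple with $|I_j|\ge\eta_1$ and iterate on subintervals'') does not supply the missing mechanism. Note that the paper itself does not address this point --- it asserts ``minor modifications'' without detail --- so the difficulty is not one you have introduced, but it is one your write-up would need to resolve to constitute a proof.
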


\section{The case with inhomogeneous $H_0^1(\Omega)$ data}\label{sec_inhomo}

In this section we point out that for the energy critical
problem,  Bourgain's argument is not needed if we assume inhomogeneous data $u_0 \in H_0^1(\Omega)$.

Indeed, consider \eqref{nls} with radial $u_0 \in H_0^1(\Omega)$.
By taking $a(x)=|x|$ and using almost the same computation
as in Proposition \ref{prop37}, we arrive at
\begin{align*}
\left\| |x|^{-\frac 16} u \right\|^6_{L_{t,x}^6 (\mathbb R\times
\Omega)} \lesssim \|u_0\|_{H^1_0(\Omega)}^2\,.
\end{align*}
This interpolates with the radial Sobolev embedding
\eqref{radial_emb}  immediately yield
\begin{align*}
\|u \|_{L_{t,x}^{10} (\mathbb R\times \Omega)} \lesssim_u 1\,,
\end{align*}
which is enough to prove scattering.

Finally, the same argument with small changes in numerology applies
to the energy supercritical case \eqref{nls_super}. We omit the
details.


\begin{thebibliography}{m}

\bibitem{Anton08}
Anton, R. \emph{Global existence for defocusing cubic NLS and Gross-Pitaveskii equations
in exterior domains.} J. Math. Pures Appl. {\bf 89} (2008), no. 4, 335--354.

\bibitem{CM79}
R. Coifman and Y. Meyer, \emph{Au d\'el\`a des op\'erateurs
pseudo-diff\'erentiels}, Ast\'erisque {\bf 57} (1979).

\bibitem{BGT04}
Burq, N., G\'erad, P., and Tzvetkov, N., \emph{On nonlinear
Schr\"odinger equations in exterior domains}. Ann. Inst. H. Poincar\'e Anal.
Non Lin\'eaire {\bf 21} (2004), no. 3, 295--318.


\bibitem{BLP08}
Burq, N., Lebeau, G., and Planchon, F.
\emph{Global existence for energy critical waves in 3-D domains}.
J. Amer. Math. Soc. {\bf 21} (2008), no. 3, 831--845.

\bibitem{bss:schrodinger}
Blair, M.D.,  Smith, H.F., and Sogge, C.D. \emph{Strichartz estimates and the nonlinear
Schr\"odinger equation in exterior domains}. arXiv:1004.3976

\bibitem{bss:wave}
Blair, M.D.,  Smith, H.F., and Sogge, C.D. \emph{Strichartz estimates for the wave
equation on manifolds with boundary}. Ann. Inst. H. Poincar\'e Anal. Non Lin\'eaire
{\bf 26} (2009), no. 5, 1817--1829.


\bibitem{borg:scatter}
J. Bourgain, \emph{Global well-posedness of defocusing 3D critical NLS in the radial case,} J. Amer. Math. Soc. \textbf{12}
(1999), 145--171.

\bibitem{cw} Cazenave, T., Weissler, F. B., \emph{The Cauchy problem for the critical nonlinear Schr\"odinger equation in $H^s$}. Nonlinear Anal. T. M. A.,
14(1990), 807--836.

\bibitem{ckstt:gwp}
J. Colliander, M. Keel, G. Staffilani, H. Takaoka, and T. Tao,
\emph{Global well-posedness and scattering for the energy-critical
nonlinear Schr\"odinger equation in $\R^3$,} Annals of
Math. \textbf{166} (2007), 1--100.


\bibitem{CS88}
Constantin P. and Saut, J.C. \emph{Local smoothing properties of dispersive equations.}
J. Amer. Math. Soc. \textbf{1} (1988), 413--439.


\bibitem{Iva:ball} Ivanovici, O., \emph{Precise smoothing effect in the exterior of balls}. Asymptot. Anal. \textbf{53} (2007), no. 4, 189--208.

\bibitem{Iv08}
Ivanovici, O. \emph{On the energy critical Schr\"odinger equation outside strictly convex obstacles.}
arXiv:0809.1060v3

\bibitem{tao:keel}
M. Keel and T. Tao, \emph{Endpoint Strichartz Estimates}, Amer. Math. J. \textbf{120} (1998), 955--980.

\bibitem{merlekenig}
C. E. Kenig and F. Merle, \emph{Global well-posedness, scattering,
and blowup for the energy-critical, focusing, non-linear
Schr\"odinger equation in the radial case,} Invent. Math.
\textbf{166} (2006), 645--675.


\bibitem{kv}
R. Killip, M. Visan, \emph{The focusing energy-critical nonlinear
Schr\"odinger equation in dimensions five and higher}, Preprint.

\bibitem{RV}
E.~Ryckman and M.~Visan, \emph{Global well-posedness and scattering for the defocusing energy-critical nonlinear Schr\"odinger
equation in $\R^{1+4}$,} Amer. J. Math. \textbf{129} (2007), 1--60.

\bibitem{Sjolin87}
Sj\"olin, P., \emph{Regularity of solutions to Schr\"odinger equations}. Duke Math J.
\textbf{55} (1987), 699--715.

\bibitem{SmSo}
Smith, H.F. and Sogge, C.D. \emph{$L^p$ regularity for the wave equation with
strictly convex obstacles}, Duke Math.~J. {\bf 73} (1994), 97--153.

\bibitem{SS10}
Smith, H.F. and Sogge, C.D. \emph{On the critical semilinear wave equation outside
convex obstacles}, J.~Amer.~Math.~Soc. {\bf 8} (1995), 879--916.

\bibitem{ss:acta}
Smith, H.F. and Sogge, C.D., \emph{On the $L^p$ norm of spectral
clusters for compact manifolds with boundary}, Acta Math. \textbf{198} (2007), 107--153.

\bibitem{Strichartz}
R.~S. Strichartz, \emph{Restriction of Fourier Transform to Quadratic Surfaces and Decay of Solutions of Wave Equations},
Duke Math. J. \textbf{44} (1977), 705--774.

\bibitem{tao:radial}
T. Tao, \emph{Global well-posedness and scattering for the higher-dimensional energy-critical non-linear Schr\"odinger equation for radial data,}
New York Journal of Math. \textbf{11} (2005), 57--80.

\bibitem{Vega88}
Vega, L., \emph{Schr\"odinger equations: pointwise convergence to the initial data}.
Proc. Amer. Math. Soc \textbf{102} (1988), 874--878.

\bibitem{thesis:art} Visan, M. \emph{The defocusing energy-critical nonlinear Schr\"odinger equation in higher dimensions,}  Duke Math. J. Vol \textbf{138} (2007), no. 2, 281--374.




\end{thebibliography}
\end{document}